\theoremstyle{plain}
\newtheorem{theorem}{Theorem}[section]
\newtheorem{proposition}[theorem]{Proposition}
\newtheorem{lemma}[theorem]{Lemma}
\theoremstyle{definition}
\newtheorem{definition}[theorem]{Definition}
\newtheorem{example}[theorem]{Example}
\theoremstyle{remark}
\newtheorem{remark}[theorem]{Remark}
\numberwithin{equation}{section}
\newcommand{\R}{\mathbb{R}}
\newcommand{\N}{\mathbb{N}}
\newcommand{\C}{\mathcal{C}}
\newcommand{\E}{\mathbb{E}}
\newcommand{\F}{\mathscr{F}}
\renewcommand{\P}{\mathbb{P}}
\newcommand{\norm}[1]{\mathopen\|#1\mathclose\|}
\newcommand{\bignorm}[1]{\bigl\|#1\bigr\|}
\newcommand{\loc}{\mathrm{loc}}
\newcommand{\initial}{\mathrm{in}}
\newcommand{\Pas}{\P\text{-a.s}}
\newcommand{\lip}{\mathrm{Lip}}
\newcommand{\eps}{\varepsilon}
\renewcommand{\leq}{\leqslant}
\renewcommand{\geq}{\geqslant}
\newcommand{\semimartingale}{semi\-martin\-gale\xspace}
\newcommand{\diffeomorphism}{diffeo\-morph\-ism\xspace}
\newcommand{\diffeomorphisms}{diffeo\-morph\-isms\xspace}
\DeclareMathOperator{\conv}{conv}
\DeclareMathOperator*{\esssup}{ess\, sup}
\DeclareMathOperator{\dist}{dist}
\DeclareMathOperator{\BV}{BV}
\title[Stochastic perturbation for scalar conservation laws]{Stochastic perturbation and zero noise limit for scalar conservation laws}
\author[U. S. Fjordholm]{Ulrik S. Fjordholm}
\address{Department of Mathematics, University of Oslo, PO Box 1053 Blindern, 0316 Oslo, Norway}
\email{ulriksf@math.uio.no}
\author[M. C. Ørke]{Magnus C. Ørke}
\email{magnusco@math.uio.no}
\subjclass[2020]{60H50, 35L65, 60H15}
\begin{document}

\begin{abstract}
Scalar conservation laws sit at the intersection between being simple enough to study analytically, while being complex enough to exhibit a wide range of nonlinear phenomena.
We introduce a novel stochastic perturbation of scalar conservation laws, inspired by mean field games. We prove well-posedness of the stochastically perturbed equation; prove that it converges as the noise parameter is sent to $0$; and that the limit is the unique entropy solution of the conservation law. Thus, the noise acts as a selection criterion for (deterministic) conservation laws. This is the first such result for nonlinear hyperbolic conservation laws.
\end{abstract}

\maketitle

\section{Introduction}

\subsection{Stochastic versus deterministic equations}

It has been known for several decades that, as a general rule, stochastic differential equations (SDEs) exhibit better well-posedness properties --- existence, uniqueness and/or regularity under weaker assumptions --- than their deterministic counterparts. This phenomenon is commonly known as \emph{regularization by noise}. For ordinary differential equations, this can be seen in the works by Zvonkin~\cite{zvonkin_1974} and Veretennikov~\cite{veretennikov_1981}, where existence and uniqueness of strong solutions of SDEs with merely $L^\infty$ velocity field (or ``drift'') is shown (see also Kunita~\cite{kunita,kunita_1990}, the stronger ``path-by-path uniqueness'' result by Davie~\cite{Dav07}, and the review paper by Gess~\cite{gess_2018}). For partial differential equations, the well-posedness results for transport equations with irregular velocity fields due to Flandoli, Gubinelli and Priola~\cite{flandoli_gubinelli_priola} stand out; see also the monograph by Flandoli~\cite{flandoli_2011} and the references therein.

In this paper we consider the scalar conservation law
\begin{equation}\label{eq:cl}
\begin{cases} 
    \partial_t u + \nabla \cdot f(u) = 0 & \text{in } \R^d\times(0,T), \\
    u\bigr|_{t=0} = u_\initial & \text{on } \R^d.
\end{cases}
\end{equation}
As is well known~\cite{holden_risebro_2015,dafermos_2016,coclite_scalar_2024}, solutions of~\eqref{eq:cl} exhibit discontinuities, and the equation must therefore be read in the weak sense. Moreover, weak solutions of~\eqref{eq:cl} are generically non-unique, and various selection principles (so-called \textit{entropy conditions}) can be imposed to single out a unique, ``physically relevant'' solution (the \textit{entropy solution}).

Various stochastic perturbations of the nonlinear PDE~\eqref{eq:cl} have been studied over the years, including Holden and Risebro~\cite{holden_conservation_1997}, Feng and Nualart~\cite{feng_stochastic_2008}, and Gess and Maurelli~\cite{gess_well-posedness_2018}. Common to all of these approaches is that the entropy condition is imposed on the stochastic equation one way or another, either by using the deterministic solution operator to construct a solution~\cite{holden_conservation_1997}, by imposing entropy conditions on the stochastic solution~\cite{feng_stochastic_2008}, or by using a kinetic formulation, where the entropy condition is implicitly baked into the equation~\cite{gess_well-posedness_2018}. To see that stochastic perturbations of~\eqref{eq:cl} are not automatically well-posed without further conditions, we recall an example due to Flandoli~\cite[Section~5.1]{flandoli_2011} and consider the stochastically perturbed equation
\begin{equation}\label{eq:cl-bad-perturbation}
    \begin{cases} 
    dv^\eps + \nabla \cdot f(v^\eps)\,dt + \eps \nabla v^\eps\circ dW_t = 0 & \text{in } \R^d\times(0,T), \\
    v^\eps\bigr|_{t=0} = u_\initial & \text{on } \R^d
\end{cases}
\end{equation}
(where we have added so-called \emph{transport noise}; see Section~\ref{sec:overview}). The change of variables $\tilde{v}^\eps_t(x)\coloneqq v_t^\eps(x-\eps W_t)$ and an application of the Ito--Wentzell--Kunita formula shows that $\tilde{v}^\eps$ is a weak solution of~\eqref{eq:cl}. (Here and elsewhere, the subscript in $v_t$ means $v$ evaluated at time $t$.) Thus, any of the infinitely many weak solutions $\tilde{v}^\eps$ of~\eqref{eq:cl} gives rise to a solution $v^\eps$ of~\eqref{eq:cl-bad-perturbation}, and the problem is therefore ill-posed \emph{per se}.

\subsection{Overview and main results}\label{sec:overview}

The purpose of the present work is twofold. First, to prove well-posedness of a novel stochastic perturbation of~\eqref{eq:cl} without explicitly imposing a selection principle. Second, to pass to the \emph{zero-noise limit} and prove that the stochastic solutions converge to the correct entropy solution of~\eqref{eq:cl}. 

We will consider the stochastically perturbed equation
\begin{equation} \label{eq:mean_field_cl}
    \begin{cases}
    du^\eps + \nabla \cdot \bigl(a(m^\eps) u^\eps\bigr)\, dt + \eps \nabla u^\eps \circ dW_t = 0 &\text{in}\ \R^d \times (0, T),\\
    m^\eps \coloneqq \E[u^\eps], \\
    u^\eps\bigr|_{t = 0} = u_\initial & \text{on } \R^d,
    \end{cases}
\end{equation}
where $a(v) \coloneqq \frac{f(v)-f(0)}{v}$ for $v\in\R$, $u_\initial\colon \R^d \to \R$ is the (deterministic) initial data, $(W_t)_{t \geq 0}$ is a $d$-dimensional Brownian motion, $\E[u^\eps]$ denotes the expectation of $u^\eps$, and $\eps$ is a positive parameter. 
We use the notation $\nabla u \circ dW = \sum_{i = 1}^d \partial_i u \circ dW^i_t$, where $\partial_i u \circ dW^i_t$ denotes Stratonovich integration. The noise in~\eqref{eq:mean_field_cl} is often called \emph{transport noise} (see e.g.~\cite{flandoli_gubinelli_priola}): If $X^\eps=X_t^\eps(x,s,\omega)$ is the (stochastic) flow generated by the family of SDEs
\begin{equation}\label{eq:stochastic_flow}
\begin{cases}
    dX_t^\eps = a\bigl(m^\eps(X_t^\eps,t)\bigr)\,dt + \eps\,dW_t &\text{for}\ s < t < T, \\ 
    X_s^\eps(x,s,\omega) = x,
\end{cases}
\end{equation}
then (at least formally), the pushforward $u(t,\omega)\coloneqq X_t^\eps(\cdot,\omega)_\# u_\initial$ solves~\eqref{eq:mean_field_cl} (here and throughout, we let $X_t(x) \coloneqq X_t(x, 0, \omega)$).

Note that taking the expected value of the SPDE~\eqref{eq:mean_field_cl} and applying Ito's formula, one observes (formally, for now) that $m^\eps$ solves the parabolic equation
\begin{equation} \label{eq:mean_field}
    \begin{cases}
        \partial_t m^\eps + \nabla \cdot f(m^\eps) = \tfrac{\eps^2}{2} \Delta m^\eps & \text{in}\ \R^d \times (0, T), \\
        m^\eps|_{t = 0} = u_\initial & \text{on } \R^d.
    \end{cases}
\end{equation}
Thus, being the solution of a parabolic PDE, the ``mean field'' $m^\eps$ in \eqref{eq:mean_field_cl} is actually quite regular. This partially motivates why one might hope for well-posedness and a well-behaved zero-noise limit for \eqref{eq:mean_field_cl}. We discuss further motivation for the specific formulation of \eqref{eq:mean_field_cl} in Section~\ref{sec:motivation-and-earlier-works}.

The following theorem establishes the existence and uniqueness of weak solutions to \eqref{eq:mean_field_cl}. For the precise notion of a weak solution, we refer the reader to Section~\ref{subsec:existence_uniqueness}.

\begin{theorem} \label{thm:main_spde}
    Let ${(\Omega, \F, \P)}$ be a probability space with a $d$-dimensional Brownian motion $(W_t)_{t \geq 0}$. Let $\eps > 0$ and $T > 0$. Assume that $u_\initial \in L^\infty(\mathbb{R}^d)$ and that $f \in C^{1, \alpha}(\mathbb{R}; \mathbb{R}^d)$ for some $\alpha \in (0, 1)$. Then:
    \begin{enumerate}[label=(\roman*)]
        \item The stochastic mean-field conservation law~\eqref{eq:mean_field_cl} has a unique weak solution $u^\eps \in L^\infty(\mathbb{R}^d \times (0, T) \times \Omega)$. This solution is given by the pushforward of the initial data along the flow,
        \begin{subequations}
        \begin{equation} \label{eq:pushforward-solution}
            u_t^\eps = (X_t^\eps)_\# u_\initial,
        \end{equation}
        that is, for all $t \in [0, T]$ and all $\vartheta \in C_c(\R^d)$, $\Pas$,
        \begin{equation} \label{eq:pushforward-solution-weak}
            \int_{\mathbb{R}^d} \vartheta(x) u_t^\eps(x) \,dx = \int_{\mathbb{R}^d} \vartheta(X_t^\eps(x)) u_\initial(x) \,dx,
        \end{equation}
        \end{subequations}
        where $X^\eps$ is the stochastic flow of \diffeomorphisms generated by the system of SDEs~\eqref{eq:stochastic_flow}.
        \item The process $u^\eps$ provides a stochastic representation for the solution of the viscous conservation law~\eqref{eq:mean_field}, in the sense that its mean, $m^\eps \coloneqq \E[u^\eps]$, is the unique weak solution of~\eqref{eq:mean_field}.
    \end{enumerate}     
\end{theorem}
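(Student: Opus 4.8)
The plan is to build the solution constructively via the stochastic flow, using a fixed-point argument that exploits the regularity of the mean field $m^\eps$ (equation \eqref{eq:mean_field}). The key observation is that the system \eqref{eq:mean_field_cl}--\eqref{eq:stochastic_flow} decouples: if one knew $m^\eps$ a priori, then $X^\eps$ would solve an SDE with a given (deterministic, smooth) drift $a(m^\eps(\cdot,t))$ and additive noise, hence generate a stochastic flow of diffeomorphisms by the classical theory (Kunita), and $u^\eps_t = (X^\eps_t)_\# u_\initial$ would be a well-defined $L^\infty$ process. Conversely, taking expectations in the resulting pushforward formula should return the solution of \eqref{eq:mean_field}. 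So the strategy is a Banach fixed point on the map $\Phi \colon m \mapsto \E[(X_t[m])_\# u_\initial]$, where $X_t[m]$ solves \eqref{eq:stochastic_flow} with drift $a(m(\cdot,t))$.

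First I would set up the deterministic parabolic theory: for $u_\initial\in L^\infty(\R^d)$ and $f\in C^{1,\alpha}$, the viscous equation \eqref{eq:mean_field} has a unique bounded weak solution $m^\eps$, with $\|m^\eps\|_{L^\infty}\le\|u_\initial\|_{L^\infty}$ (maximum principle) and, crucially, instantaneous smoothing: $m^\eps(\cdot,t)$ is $C^\infty$ for $t>0$, with gradient bounds that are locally uniform on $(0,T]$ (standard parabolic regularity, e.g.\ via the heat kernel and Duhamel, bootstrapping with the $C^{1,\alpha}$ flux). This gives a drift $b(x,t):=a(m^\eps(x,t))$ that is bounded and, for each $t>0$, Lipschitz in $x$ — enough to invoke Kunita's theorem on stochastic flows, yielding a flow of diffeomorphisms $X^\eps_t$ solving \eqref{eq:stochastic_flow}, with the inverse flow and Jacobian estimates one needs. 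I would then verify directly, via the Itô--Wentzell--Kunita change of variables formula (the same computation referenced in the introduction for \eqref{eq:cl-bad-perturbation}), that $u^\eps_t=(X^\eps_t)_\# u_\initial$, defined weakly by \eqref{eq:pushforward-solution-weak}, is a weak solution of \eqref{eq:mean_field_cl}, and that $\E[u^\eps_t]$ solves \eqref{eq:mean_field} — which, by uniqueness of the parabolic problem, forces $\E[u^\eps]=m^\eps$, closing the loop and proving (ii) simultaneously with existence in (i).

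For uniqueness in (i), suppose $\tilde u^\eps$ is any weak solution in $L^\infty(\R^d\times(0,T)\times\Omega)$ with mean field $\tilde m^\eps:=\E[\tilde u^\eps]$. Taking expectations in the weak formulation of \eqref{eq:mean_field_cl} (the Stratonovich transport term contributes the Laplacian after conversion to Itô and taking expectation, the martingale part vanishing) shows $\tilde m^\eps$ is a bounded weak solution of \eqref{eq:mean_field}, hence $\tilde m^\eps=m^\eps$ by parabolic uniqueness. But then $\tilde u^\eps$ solves the \emph{same} linear SPDE $d\tilde u^\eps+\nabla\cdot(a(m^\eps)\tilde u^\eps)\,dt+\eps\nabla\tilde u^\eps\circ dW_t=0$ with the now-fixed smooth coefficient $a(m^\eps)$; uniqueness for this linear transport SPDE follows from the method of characteristics along the flow $X^\eps$ (e.g.\ testing against $\vartheta(X^\eps_t(x))$ and using that the flow exhausts $\R^d$, or the duality/renormalization argument of Flandoli--Gubinelli--Priola), giving $\tilde u^\eps_t=(X^\eps_t)_\# u_\initial=u^\eps_t$.

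The main obstacle I anticipate is the interplay between the low initial regularity $u_\initial\in L^\infty$ (so the drift $a(m^\eps(\cdot,t))$ degenerates as $t\downarrow 0$: its spatial Lipschitz constant may blow up like a negative power of $t$) and the need for a genuine flow of diffeomorphisms down to $t=0$. One must check that the parabolic gradient estimate $\|\nabla m^\eps(\cdot,t)\|_{L^\infty}\lesssim t^{-1/2}$ is \emph{integrable} in $t$, so that $\int_0^T\|\nabla b(\cdot,t)\|_{L^\infty}\,dt<\infty$ and Kunita's flow construction still applies on $[0,T]$ with uniform moment bounds for $X^\eps$ and its inverse; this integrability is exactly what makes the additive-noise SDE well-posed despite the singular drift, and it is the technical heart of the argument. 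A secondary subtlety is justifying the Itô--Wentzell formula for the composition of the $L^\infty$ (only distributionally differentiable) density $u^\eps$ with the flow — handled by first proving everything for smooth $u_\initial$ and passing to the limit using the weak-$*$ stability of the pushforward and the continuity of $m\mapsto X[m]$ in a suitable topology, together with $\|u^\eps_t\|_{L^\infty}\le\|u_\initial\|_{L^\infty}$ from the volume-preserving structure of \eqref{eq:stochastic_flow} (the drift is not divergence-free, so one instead controls $\|u^\eps_t\|_{L^\infty}$ through the Jacobian of $X^\eps$, bounded via $\int_0^T\|\dvrg b(\cdot,t)\|_{L^\infty}\,dt<\infty$, again using parabolic regularity).
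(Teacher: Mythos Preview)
Your overall architecture matches the paper's proof almost exactly: fix the deterministic mean $\bar m$ solving the viscous equation~\eqref{eq:mean_field}, build the flow $X^\eps$ from the SDE with drift $a(\bar m)$, define $u^\eps=(X^\eps)_\#u_\initial$, check that $\E[u^\eps]$ solves the same linear parabolic PDE as $\bar m$, and conclude $\E[u^\eps]=\bar m$ by uniqueness; then uniqueness of $u^\eps$ follows because any weak solution has mean $\bar m$ and hence solves the same linear stochastic continuity equation. (Your mention of a Banach fixed point is superfluous---neither you nor the paper actually iterates; the argument is a direct construction plus a consistency check.)

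There is, however, a genuine gap in the regularity step. Under the hypothesis $f\in C^{1,\alpha}$, the function $a(v)=\frac{f(v)-f(0)}{v}$ is only $C^{0,\alpha}$ near $v=0$ (compute $a'(v)$ and use $f'\in C^{0,\alpha}$), not Lipschitz. Consequently, even if $m^\eps(\cdot,t)$ were $C^\infty$ in $x$, the composition $b(x,t)=a(m^\eps(x,t))$ would in general be only H\"older, not Lipschitz, in $x$---think of $a(v)=|v|^\alpha$ composed with a smooth $m$ vanishing on a set. So your plan to ``invoke Kunita's theorem'' via an integrable-in-time Lipschitz constant $\int_0^T\|\nabla b(\cdot,t)\|_{L^\infty}\,dt<\infty$ does not go through at the stated generality, and the same issue undermines your proposed $L^\infty$ control via $\int_0^T\|\dvrg b\|_{L^\infty}\,dt$. (Your bootstrapping claim that $m^\eps\in C^\infty$ for $t>0$ also overstates what a $C^{1,\alpha}$ flux gives.)

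The paper closes this gap differently: it proves only the H\"older regularity $m^\eps\in L^q((0,T);C^{0,\beta})$ for $\beta<2/q$ (Theorem~\ref{thm:mean_field}), composes to get $a(m^\eps)\in L^2((0,T);C^{0,\alpha\beta})$, and then invokes \emph{regularization-by-noise} flow results for H\"older drifts (Theorems~\ref{thm:well_posed_sde} and~\ref{thm:linear_continuity}, cited from \cite{oerke_2025}) rather than Kunita's Lipschitz theory. These results yield both the $C^{1,\beta}$-diffeomorphism flow and the $L^\infty$ well-posedness of the linear stochastic continuity equation directly, replacing your Itô--Wentzell verification and Jacobian bounds. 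Your argument would be fine if one strengthened the assumption to $f\in C^2$ (so $a\in C^1$); as stated, the H\"older-drift machinery is essential.
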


The next step is to verify that the stochastic formulation is consistent with the classical deterministic theory in the zero-noise limit. Our second main result confirms that, under certain conditions, the solutions $u^\eps$ converge to the entropy solution of the deterministic conservation law~\eqref{eq:cl} as $\eps \to 0$.

\begin{theorem}\label{thm:main_zero_noise}
    Assume $d = 1$, that $u_\initial \in \BV_\loc \cap L^\infty \cap L^1(\R)$, and let the flux $f \in C^2(\R)$ be strictly convex. For each $\eps > 0$, let $u^\eps$ be the unique weak solution of the stochastic mean-field conservation law ~\eqref{eq:mean_field_cl}, and let $u$ be the unique entropy solution of the corresponding deterministic conservation law~\eqref{eq:cl}. Then, in the zero-noise limit, $u^{\eps}$ converges to $u$, weak-$*$ in space, uniformly in time, and strongly in $L^p(\Omega)$ for all $p \geq 1$. More precisely, for any continuous and bounded test function $\vartheta \in C_b(\R)$,
    \begin{equation*} \label{eq:spde_convergence}
        \lim_{\eps \to 0} \E\biggl[\sup_{t \in [0, T]} \biggl|\int_{\R} \vartheta(x) \bigl(u_t^\eps(x) - u(x, t)\bigr)\, dx\biggr|^p \biggr] = 0.
    \end{equation*}
\end{theorem}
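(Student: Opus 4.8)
The plan is to exploit the stochastic representation from Theorem~\ref{thm:main_spde}: since $u_t^\eps = (X_t^\eps)_\# u_\initial$ and $m^\eps = \E[u^\eps]$ solves the viscous conservation law~\eqref{eq:mean_field}, the bulk of the work is to (a) control the fluctuation $u^\eps - m^\eps$, which is purely driven by the $\eps$-scaled Brownian transport, and (b) invoke the classical vanishing-viscosity theory to get $m^\eps \to u$. Step (b) is standard: for $d=1$, $f\in C^2$ strictly convex, and $u_\initial \in \BV_\loc\cap L^\infty\cap L^1$, the solutions of~\eqref{eq:mean_field} satisfy uniform $L^\infty$ and local $\BV$ bounds (Oleinik one-sided estimate), so $m^\eps \to u$ in $C([0,T];L^1_\loc(\R))$, and in particular $\int_\R \vartheta(x)(m^\eps(x,t)-u(x,t))\,dx \to 0$ uniformly in $t$ for every $\vartheta\in C_b(\R)$; I would cite~\cite{holden_risebro_2015,dafermos_2016} here. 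It then suffices to show
\begin{equation*}
    \lim_{\eps\to 0}\E\biggl[\sup_{t\in[0,T]}\biggl|\int_\R \vartheta(x)\bigl(u_t^\eps(x)-m^\eps(x,t)\bigr)\,dx\biggr|^p\biggr] = 0.
\end{equation*}

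For the fluctuation term, the key observation is that, by~\eqref{eq:pushforward-solution-weak}, $\int_\R \vartheta(x)u_t^\eps(x)\,dx = \int_\R \vartheta(X_t^\eps(x))u_\initial(x)\,dx$, so the quantity to control is $\int_\R \bigl(\vartheta(X_t^\eps(x)) - \E[\vartheta(X_t^\eps(x))]\bigr)u_\initial(x)\,dx$ — noting $\E[u_t^\eps] = m_t^\eps$. The heuristic is that the flow~\eqref{eq:stochastic_flow} is $X_t^\eps = Y_t^\eps + \eps W_t$ where $Y^\eps$ is a bounded-drift perturbation, so $X_t^\eps$ concentrates around a deterministic characteristic as $\eps\to 0$; more precisely $\E\bigl[\sup_{t\le T}|X_t^\eps(x) - Z_t(x)|^2\bigr]\lesssim \eps^2$ where $Z_t(x)$ solves the ODE with drift $a(u(\cdot,\cdot))$ — but since $a(m^\eps)$ only converges to $a(u)$ in $L^1_\loc$ and $u$ has shocks, I would instead argue more softly: use the $L^\infty$ bound on the drift $a(m^\eps)$ (uniform in $\eps$, from the maximum principle for~\eqref{eq:mean_field}) to get the a priori moment bound $\E|X_t^\eps(x) - X_s^\eps(x)|^2 \lesssim |t-s| + \eps^2|t-s|$, hence tightness and equicontinuity; then show the variance $\mathrm{Var}\bigl(\int_\R \vartheta(X_t^\eps(x))u_\initial(x)\,dx\bigr)\to 0$ by a direct computation using the representation $X_t^\eps(x) - X_t^\eps(y) \to$ (difference of deterministic characteristics) in probability together with the fact that the Brownian part $\eps W_t$ is shared across all starting points $x$ and vanishes — so the only randomness that survives the $\eps\to 0$ limit is the common shift $\eps W_t$, whose contribution to $\vartheta(X_t^\eps(x))$ is $O(\eps)$ after a Taylor expansion when $\vartheta$ is Lipschitz (and by density of Lipschitz functions in $C_b$ for the topology of uniform convergence on compacts, combined with tightness, one reduces to that case). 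The $\sup_{t\in[0,T]}$ and the $L^p(\Omega)$ norm are then handled by a Burkholder–Davis–Gundy / Doob maximal inequality applied to the martingale part of $t\mapsto \int_\R \bigl(\vartheta(X_t^\eps(x)) - \E[\vartheta(X_t^\eps(x))]\bigr)u_\initial(x)\,dx$, whose quadratic variation carries an explicit factor $\eps^2$, plus a uniform-integrability argument upgrading $L^2$ to $L^p$ using the uniform $L^\infty$ bound on $u^\eps$ and $u_\initial\in L^1$ (which bounds the integral deterministically, so all moments are controlled and $L^2\to 0$ plus uniform boundedness gives $L^p\to 0$).

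The main obstacle is making the fluctuation estimate genuinely quantitative despite the low regularity of the drift $a(m^\eps)$: near a shock of $u$, the characteristics $Z_t(x)$ are not well-defined classically, and one cannot simply write $X_t^\eps(x) \approx Z_t(x) + \eps W_t$ with a small remainder uniformly in $x$. I expect the cleanest route is to avoid characteristics of the limit altogether and work directly with the pair $(u^\eps, m^\eps)$: write $w^\eps \coloneqq u^\eps - m^\eps$, which by subtracting~\eqref{eq:mean_field_cl} and~\eqref{eq:mean_field} satisfies a \emph{linear} SPDE $dw^\eps + \nabla\cdot(a(m^\eps)w^\eps)\,dt + \eps\nabla u^\eps\circ dW_t = \tfrac{\eps^2}{2}\Delta m^\eps\,dt$ with zero initial data, and derive an energy-type or duality estimate showing $\E\|w_t^\eps\|_{H^{-s}}^2 \to 0$ for some $s>0$ — the forcing terms all carry a factor $\eps$ or $\eps^2$, and the only subtlety is that $\nabla u^\eps$ is a distribution, so one must pair against a smooth test function (exactly the $\vartheta$ in the statement, mollified) and track the Stratonovich-to-Itô correction, which contributes another $\eps^2$. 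Controlling the resulting terms uniformly in $\eps$ requires only the uniform $L^\infty$ bounds on $u^\eps$ and $m^\eps$ and the parabolic smoothing $\|\nabla m_t^\eps\|_{L^1} \lesssim \eps^{-1}$ type bounds for~\eqref{eq:mean_field} (so that $\eps^2\Delta m^\eps$ tested against $\vartheta$ is $O(\eps)$), both of which are available; assembling these into the $\sup_t$ in $L^p(\Omega)$ statement is then the BDG/Doob step described above.
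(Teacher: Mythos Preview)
Your decomposition $u^\eps - u = (u^\eps - m^\eps) + (m^\eps - u)$ and the use of vanishing viscosity for the second term are fine, but the fluctuation step has a genuine gap that the paper's argument avoids entirely.

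For your second route (the linear SPDE for $w^\eps = u^\eps - m^\eps$), the claim that ``the forcing terms all carry a factor $\eps$ or $\eps^2$'' is not correct once you test against a fixed $\vartheta$: in It\^o form one has
\[
\langle\vartheta, w_t^\eps\rangle = \int_0^t \langle a(m^\eps)\,\partial_x\vartheta, w_s^\eps\rangle\,ds + \tfrac{\eps^2}{2}\int_0^t\langle\partial_{xx}\vartheta, w_s^\eps\rangle\,ds + \eps\int_0^t\langle\partial_x\vartheta, u_s^\eps\rangle\,dW_s,
\]
and the transport term $\int_0^t\langle a(m^\eps)\partial_x\vartheta, w^\eps\rangle\,ds$ is $O(1)$, not $O(\eps)$. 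To absorb it one would solve the dual backward equation $-\partial_s\psi - a(m^\eps)\partial_x\psi - \tfrac{\eps^2}{2}\partial_{xx}\psi = 0$, $\psi(t)=\vartheta$, and then bound $\|\partial_x\psi(s)\|_{L^\infty}$ uniformly in $\eps$; but $\partial_x\psi$ is controlled by the Jacobian of the forward flow, which in rarefaction regions grows like $\exp\bigl(\int_s^t |a'(m^\eps)|\,\partial_x m^\eps\,dr\bigr)$, and the Oleinik bound $\partial_x m^\eps \lesssim 1/r$ only gives a polynomial blow-up in $t/s$ that need not be integrable for general strictly convex $f$. The same obstruction prevents an $H^{-s}$ energy estimate from closing with $\eps$-uniform constants, since commutators with $a(m^\eps)$ see its full Lipschitz norm, which is $O(\eps^{-1})$ by parabolic smoothing. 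Separately, your invocation of ``the uniform $L^\infty$ bounds on $u^\eps$'' is false: as Example~\ref{example:compressive} illustrates, the Jacobian of $(X^\eps_t)^{-1}$ blows up near a shock as $\eps\to 0$, so $u^\eps$ piles up mass and is \emph{not} uniformly bounded in $L^\infty$ (only the tested quantity $\int\vartheta u^\eps = \int\vartheta(X^\eps)u_\initial$ is, via $\|\vartheta\|_\infty\|u_\initial\|_{L^1}$). Your first route is circular: showing that $\mathrm{Var}\bigl(\int\vartheta(X_t^\eps)u_\initial\bigr)\to 0$ amounts to showing that $X^\eps(x)$ concentrates at a deterministic limit, which is the content of the result itself; the heuristic ``the only randomness that survives is the common shift $\eps W_t$'' presupposes exactly this.

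What the paper does instead is to \emph{embrace} the limiting characteristics rather than avoid them. The entropy solution itself has a pushforward representation $u(\cdot,t) = (X_t)_\# u_\initial$ where $X$ is the unique Filippov flow of $\dot{X} = a(u(X,t))$ (Theorem~\ref{thm:particle_paths}); the proof then reduces to showing $X^\eps(x)\to X(x)$ pointwise in $x$. This is done via a zero-noise limit theorem for SDEs (Theorem~\ref{thm:filippov_convergence}) whose convergence hypothesis on the drifts is weak enough to accommodate shocks: it only asks that $b^\eps$ approach the \emph{convex hull} $K_R[b]$ of the limiting drift in $L^1_t$, and Lemma~\ref{lemma:drift_convergence} shows this follows from $L^\infty_tL^1_x$ convergence together with the one-sided Lipschitz (Oleinik) bound $|m^\eps(\cdot,t)|_{\lip^+}\le C/t$, which is uniform in $\eps$. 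Uniqueness of the Filippov limit then upgrades weak convergence to convergence in probability (Proposition~\ref{prop:flow_convergence_pointwise}), and the theorem follows by dominated convergence in $\int(\vartheta(X^\eps_t)-\vartheta(X_t))u_\initial\,dx$. The one-sided Lipschitz structure is the key ingredient you are missing; it is exactly what lets one handle the shock without needing two-sided gradient control on either $m^\eps$ or any dual quantity.
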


The specific mode of convergence presented in Theorem~\ref{thm:main_zero_noise} is discussed in Example~\ref{example:compressive}.

\subsection{Motivation and connection to earlier work}\label{sec:motivation-and-earlier-works}

The reason one might hope for well-posedness of~\eqref{eq:mean_field_cl}, and its convergence to the correct solution as $\eps\to0$, is threefold. 

First, as already mentioned, the mean field $m^\eps$ satisfies the parabolic equation~\eqref{eq:mean_field}. Due to the smoothing properties of this equation, the drift $a(m^\eps)$ appearing in~\eqref{eq:mean_field_cl} and~\eqref{eq:stochastic_flow} possesses enhanced regularity. Second, the sequence $(m^\eps)_{\eps>0}$ converges to the entropy solution of~\eqref{eq:cl} as $\eps\to0$. We refer to \cite[Appendix~B]{holden_risebro_2015} and \cite{kruzkov_1970} for these well-established results.

Third, the specific form of the stochastic problem~\eqref{eq:mean_field_cl} is motivated by a recent result of ours~\cite{fjordholm_maehlen_oerke}: A weak solution $u$ of~\eqref{eq:cl} is the entropy solution if and only if the equation
\begin{equation} \label{eq:cl_flow}
    \begin{cases}
        \tfrac{d}{dt}{X}_t = a_k\bigl(u(X_t,t)\bigr) &\text{for}\ t > s, \\
        X_s = x
    \end{cases} \qquad \text{where } a_k(u)\coloneqq  \frac{f(u) - f(k)}{u - k}
\end{equation}
has a unique solution for all $k \in \R$ and every starting point $(x,s)$ (see Theorem~\ref{thm:particle_paths} for a precise formulation). Unfortunately, this result holds only in $d=1$ spatial dimensions, so our zero-noise result will be restricted to one dimension. Comparing to the stochastic problem~\eqref{eq:stochastic_flow}, and recalling that $\lim_{\eps\to0}m^\eps$ is the entropy solution of~\eqref{eq:cl}, one might hope that $\lim_{\eps\to0}X^\eps=X$, and as a consequence, that $\lim_{\eps\to0}u^\eps$ is the entropy solution of~\eqref{eq:cl}. This is indeed confirmed in Theorem~\ref{thm:main_zero_noise}.

The form of the stochastically perturbed equation \eqref{eq:mean_field_cl} and \eqref{eq:stochastic_flow} was motivated by McKean--Vlasov equations, such as those appearing in mean-field games; see e.g.~\cite{cardaliaguet_notes_2013} and references therein. Equations similar to \eqref{eq:mean_field_cl} have appeared in works on turbulence modeling by Eyink, Drivas, Holm, Leahy and others; see e.g.~\cite{drivas_holm_leahy,eyink_spontaneous_2015} and references therein. The so-called \emph{LA SALT} version of the Burgers equation (i.e.,~\eqref{eq:cl} with $f(u)\coloneqq u^2/2$) can be written as
\begin{equation} \label{eq:la-salt-burgers}
    \begin{cases}
    dv^\eps + m^\eps\partial_x v^\eps\, dt + \eps \nabla v^\eps \circ dW_t = 0 &\text{in}\ \R \times (0, T),\\
    m^\eps \coloneqq \E[v^\eps], \\
    v^\eps\bigr|_{t = 0} = u_\initial & \text{on } \R
    \end{cases}
\end{equation}
(see~\cite[Sec.~4.2]{drivas_holm_leahy}). Just as for our equation \eqref{eq:mean_field_cl}, the mean field $m^\eps$ in~\eqref{eq:la-salt-burgers} will solve the viscous Burgers equation. In contrast to the continuity equation~\eqref{eq:mean_field_cl}, however, \eqref{eq:la-salt-burgers} is a \emph{transport equation}, and therefore one would expect the solution of~\eqref{eq:la-salt-burgers} to be given by
\[
v^\eps(x,t) = u_\initial\bigl(Y^\eps_0(x,t)\bigr),
\]
where
\begin{equation}\label{eq:la-salt-characteristics}
\begin{cases}
    dY^\eps_t = m^\eps(Y^\eps_t,t)\,dt + \eps\,dW_t & \text{for } 0<t<s, \\
    Y^\eps_s(x,s,\omega) = x
\end{cases}
\end{equation}
(this is indeed the case if, for example, $u_\initial \in \BV_\loc(\R^d)$; see \cite{oerke_2025}).

Contrast this to our model \eqref{eq:mean_field_cl}--\eqref{eq:mean_field}, which for Burgers equation reads as
\begin{equation}\label{eq:mean_field_cl-burgers}\tag{\ref*{eq:mean_field_cl}'}
    \begin{cases}
    du^\eps + \partial_x\bigl(\tfrac{m^\eps}{2} u^\eps\bigr)\, dt + \eps \nabla u^\eps \circ dW_t = 0 &\text{in}\ \R \times (0, T),\\
    m^\eps \coloneqq \E[u^\eps], \\
    u^\eps\bigr|_{t = 0} = u_\initial & \text{on } \R,
    \end{cases}
\end{equation}
whose solution is given by the pushforward formula $u_t^\eps = (X^\eps_t)_\#u_\initial$ (see~\eqref{eq:pushforward-solution}),
where
\begin{equation}\label{eq:stochastic_flow_burgers}\tag{\ref*{eq:stochastic_flow}'}
\begin{cases}
    dX^\eps_t = \dfrac{m^\eps(X^\eps_t,t)}{2}\,dt + \eps\,dW_t & \text{for } s<t<T, \\
    X^\eps_s(x,s,\omega) = x.
\end{cases}
\end{equation}
Note the crucial factor $\frac12$ in~\eqref{eq:stochastic_flow_burgers}, which is not present in~\eqref{eq:la-salt-characteristics}. We compare these two models both in the viscous ($\eps>0$) and inviscid ($\eps=0$) regimes in the following two examples.

\begin{example} \label{example:compressive}
We compare the behavior of the one-dimensional stochastic mean-field Burgers equation~\eqref{eq:mean_field_cl-burgers} and the \emph{LA SALT} Burgers equation~\eqref{eq:la-salt-burgers}, both for $\eps = 0$ and $\eps > 0$. In this example we study the ``compressive'' initial data
\begin{equation*}
    u_\initial(x) =
    \begin{cases}
        1 & \text{if } x < 0, \\
        -1 & \text{else}.
    \end{cases}
\end{equation*}

The velocity fields $m^\eps/2$ in~\eqref{eq:mean_field_cl-burgers} and $m^\eps$ in~\eqref{eq:la-salt-burgers} are given by the unique solution of the one-dimensional Burgers equation
\begin{equation} \label{eq:burgers_example}
    \partial_t m^\eps + \partial_x\biggl(\frac{1}{2}(m^\eps)^2\biggr) = \frac{\eps^2}{2} \partial_{xx} m^\eps
\end{equation}
with initial data $u_\initial$. In the inviscid case ($\eps = 0$), the equation becomes the hyperbolic conservation law $\partial_t u + \partial_x(u^2/2) = 0$, whose unique entropy solution is the stationary shock wave $u(t) \equiv u_\initial$. In the viscous case ($\eps > 0$), the diffusion term $\frac{\eps^2}{2} \partial_{xx} m^\eps$ smoothens out the initial discontinuity for all $t > 0$, and the resulting solution $m^\eps$ is a smooth profile that converges to $u$ in $L^1_\loc(\mathbb{R} \times (0, T))$ as $\eps \to 0$.

Starting with the stochastic mean-field Burgers equation \eqref{eq:mean_field_cl-burgers}, we now use these two fixed solutions, the shock $u$ and the smooth profile $m^\eps$, as velocity fields corresponding to the inviscid case ($\eps = 0$) and the viscous case ($\eps > 0$):
\begin{equation*}
    \partial_t u + \partial_x \biggl(\frac{1}{2} u^2\biggr) = 0, \qquad d u^\eps + \partial_x \biggl(\frac{m^\eps}{2} u^\eps\biggr)\, dt + \eps \partial_x u^\eps\circ dW_t = 0.
\end{equation*}
As established in \cite{fjordholm_maehlen_oerke} (see Theorem~\ref{thm:particle_paths} in Section~\ref{sec:prelims}) and Theorem~\ref{thm:main_spde}, these equations both have unique weak solutions given by
\begin{equation} \label{eq:compressive_solution_formulas}
    u(\cdot, t) = (X_t)_{\#} u_\initial, \qquad u_t^\eps = (X_t^\eps)_{\#} u_\initial,
\end{equation}
where the flows are generated by \eqref{eq:stochastic_flow_burgers}, $X$ being the (Filippov) flow for the velocity field $u/2$, and $X^\eps$ the stochastic flow for velocity field $m^\eps/2$.

Although the similar solution formulas in \eqref{eq:compressive_solution_formulas} hint to a connection in the zero-noise limit, the qualitative behavior of these flows is drastically different. In the deterministic equation (the inviscid Burgers equation), the discontinuous velocity field $u/2$ causes particle paths to merge at the shock front. This phenomenon allows for cancellation of mass carried by the entropy solution $u$. In contrast, the smooth velocity field $m^\eps/2$ in the stochastic equation on the right generates a stochastic flow that is a \diffeomorphism, where paths can never merge. Consequently, there is no mechanism for mass cancellation for the solution $u^\eps$.

This fundamental difference has consequences for the convergence $u^\eps \to u$. Although the mean field $m^\varepsilon$ converges to $u$ in $L^1$, the lack of a mass cancellation mechanism means that mass of $u^\eps$ of opposite signs piles up on both sides of the shock. This situation is shown in Figure~\ref{fig:compressive_convergence_figure_mean_field_cl} (see Appendix~\ref{app:numerical_methods} and Table~\ref{table:default_params} for a description of numerical methods and default run parameters). This prevents $L^1$-convergence of $u^\eps$ to the deterministic entropy solution $u$, and necessitates the spatial weak-$*$ convergence established in Theorem~\ref{thm:main_zero_noise}.

\begin{figure}[htbp]
    \centering
    \includegraphics[width=0.8\textwidth]{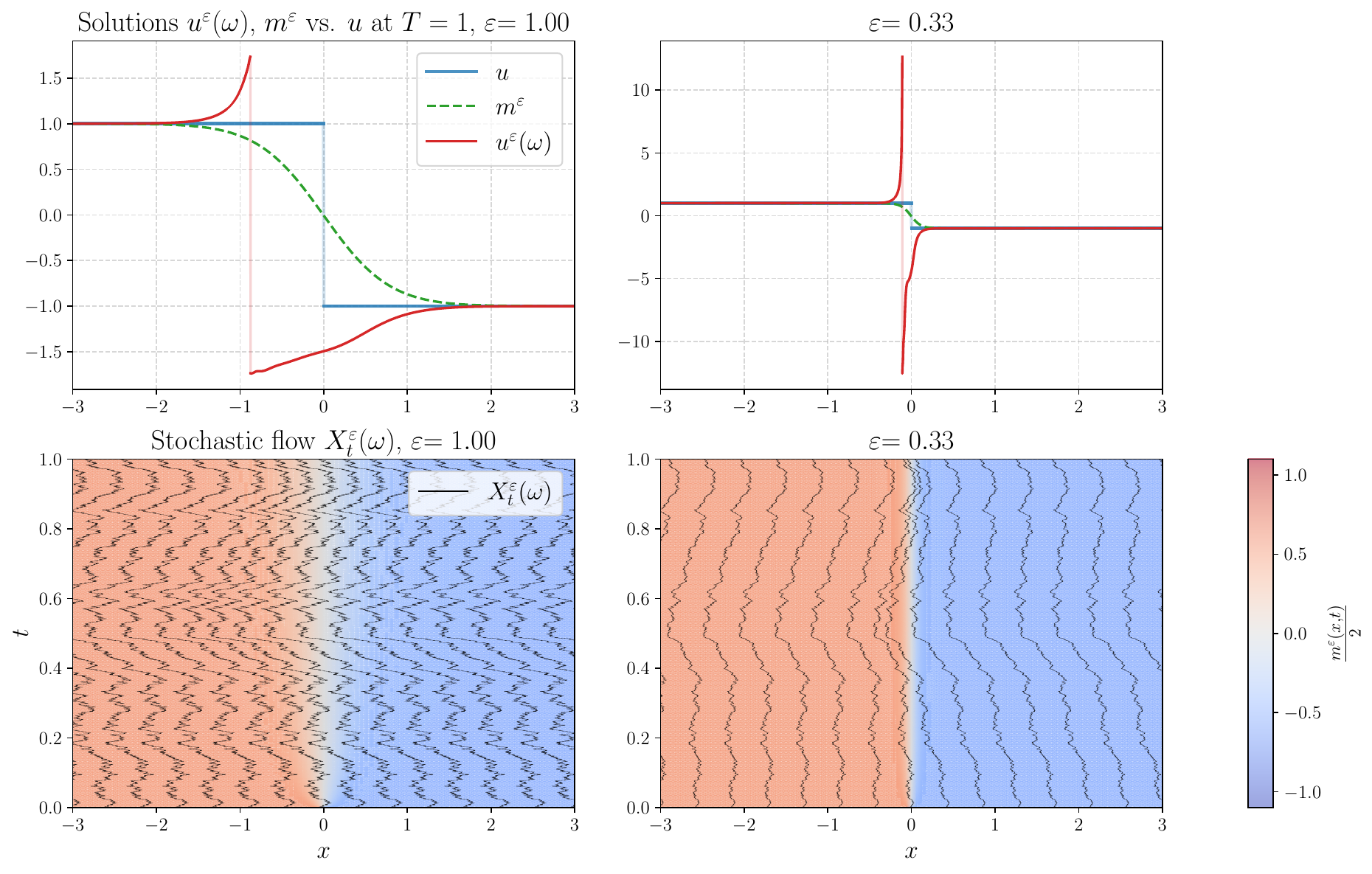}
    \caption{A sample path $u^\eps(\omega)$ of the stochastic mean-field Burgers equation~\eqref{eq:mean_field_cl-burgers} with compressive shock initial data, for $\eps = 1$ (left) and $\eps = 1/3$ (right). The top panels compare $u^\eps(\omega)$ to the viscous solution $m^\eps$ of~\eqref{eq:burgers_example} and the entropy solution $u$. The bottom panels show the stochastic flow $X^\eps(\omega)$, generated by the SDE \eqref{eq:stochastic_flow_burgers} with drift $m^\eps/2$, and used to construct $u^\eps(\omega)$ via the pushforward formula $u^\eps = X^\eps_\# u_\initial$. In the flow plots, the background is shaded proportional to the magnitude of the drift~$m^\eps/2$.}
    \label{fig:compressive_convergence_figure_mean_field_cl}
\end{figure}

Note that while individual realizations of the stochastic solution $u^\eps$ accumulate mass on both sides of the shock, thereby amplifying the discontinuity, its expectation, $\E[u^\eps]$, recovers the smooth deterministic profile $m^\eps$. This is shown in Figure~\ref{fig:expectation} (left panel).

Next, we consider the LA SALT Burgers equation~\eqref{eq:la-salt-burgers}, using the entropy solution $u \equiv u_\initial$ and the viscous solution $m^\eps$ of \eqref{eq:burgers_example} as velocity fields for the two transport equations \eqref{eq:la-salt-burgers} corresponding to the inviscid case ($\eps = 0$) and the viscous case ($\eps > 0$):
\begin{equation*}
    \partial_t u + u \partial_x u = 0, \qquad d v^\eps + m^\eps \partial_x v^\eps\, dt + \eps \partial_x v^\eps \circ dW_t = 0.
\end{equation*}
The stochastic equation on the right has unique weak solution given by $v(x, t) = u_\initial(Y^\eps_0(x, t))$, where $Y^\eps$ is the stochastic flow generated by \eqref{eq:la-salt-characteristics}. This solution is shown in Figure~\ref{fig:compressive_convergence_figure_la_salt} for two different values of $\eps$. However, the deterministic conservation law on the left lacks an analogous solution formula in terms of the backward characteristic flow, suggesting a potential inconsistency in the zero-noise limit. The next example will confirm that this is indeed the case.

\begin{figure}[htbp]
    \centering
    \includegraphics[width=0.8\textwidth]{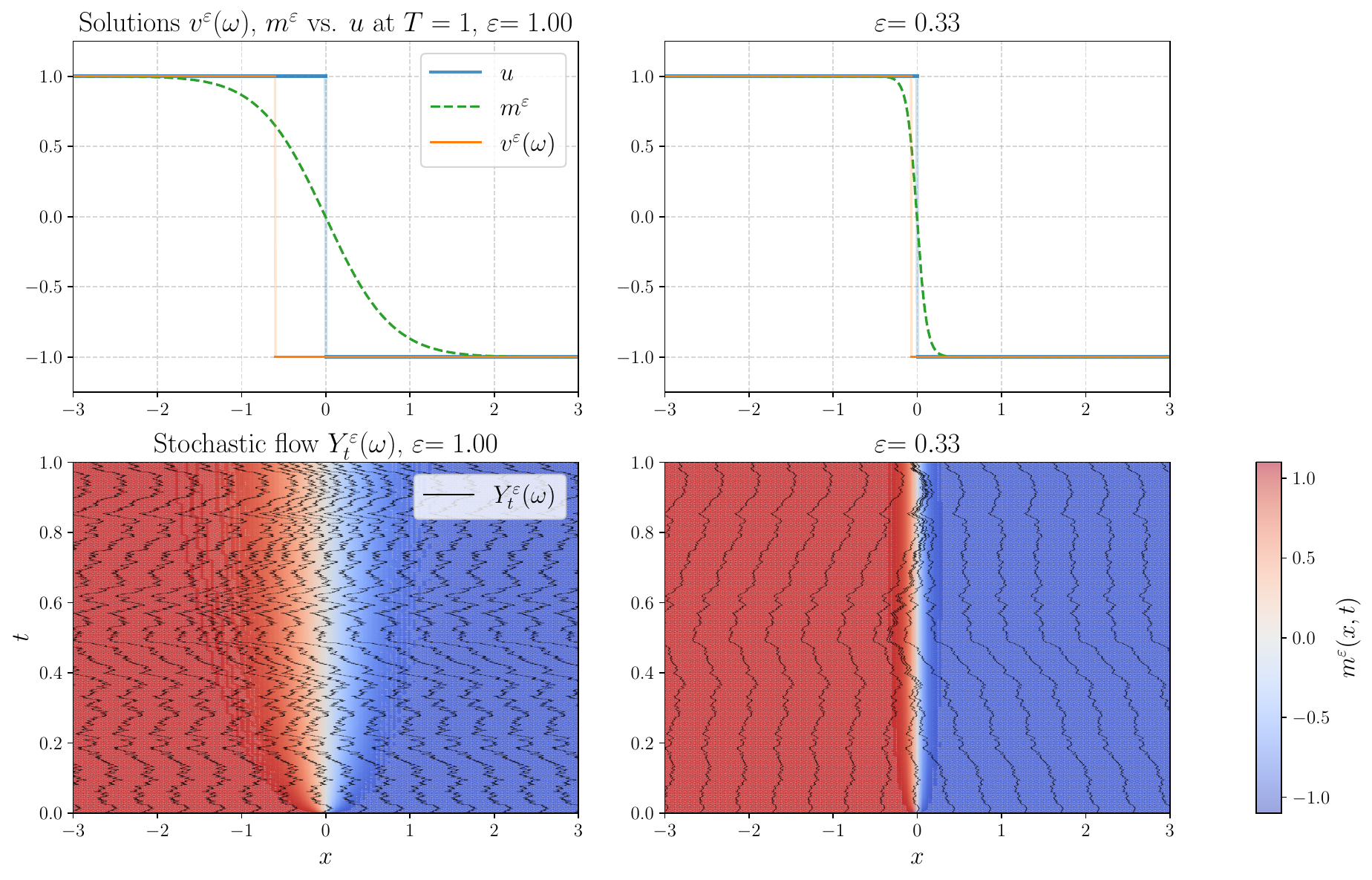}
    \caption{A sample path $v^\eps(\omega)$ of the LA SALT Burgers equation~\eqref{eq:la-salt-burgers} with compressive shock initial data, for $\eps = 1$ (left) and $\eps = 1/3$ (right). The top panels compare $v^\eps(\omega)$ to the viscous solution $m^\eps$ of~\eqref{eq:burgers_example} and the entropy solution $u$. The bottom panels show the stochastic flow $Y^\eps(\omega)$, generated by the SDE \eqref{eq:la-salt-characteristics} with drift  $m^\eps$, and used to construct $v^\eps(\omega)$ via the formula $v^\eps = u_\initial\bigl(Y^\eps_0(x,t)\bigr)$. The background is shaded proportional to the magnitude of the drift $m^\eps$. The driving Brownian motion is identical to that used in Figure~\ref{fig:compressive_convergence_figure_mean_field_cl}.}
    \label{fig:compressive_convergence_figure_la_salt}
\end{figure}
\end{example}

\begin{figure}[htbp]
    \centering
    \includegraphics[width=0.4\textwidth]{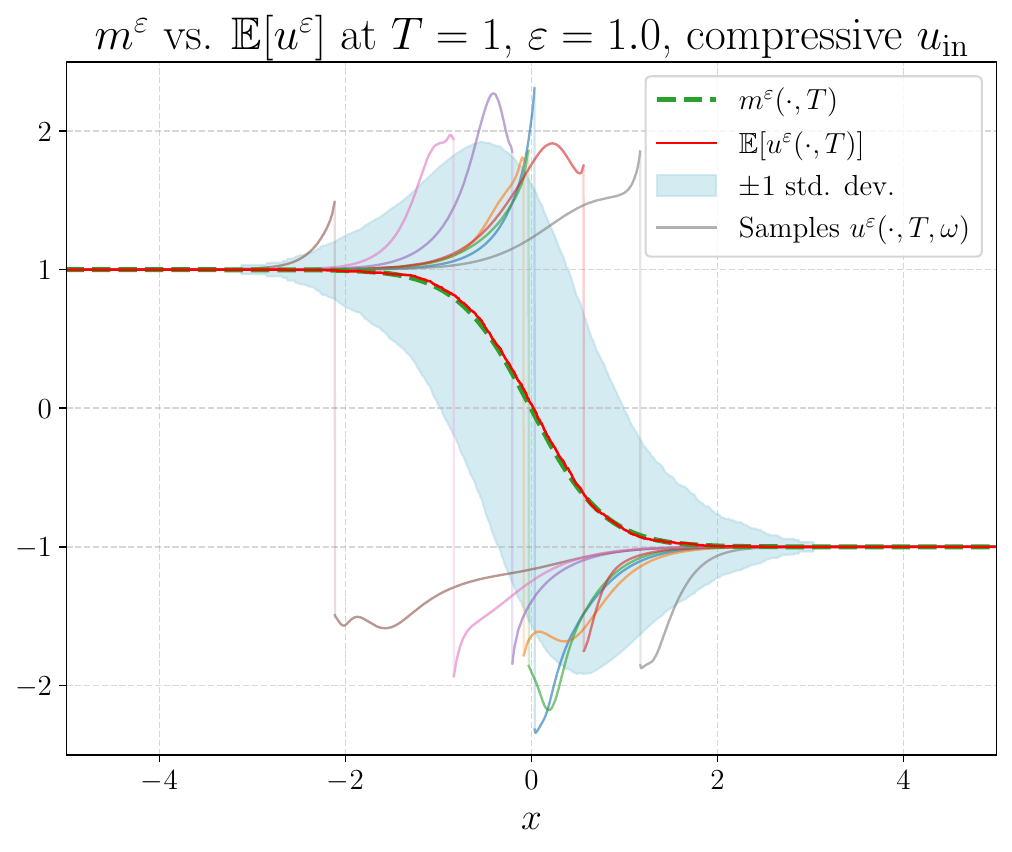}
    \includegraphics[width=0.4\textwidth]{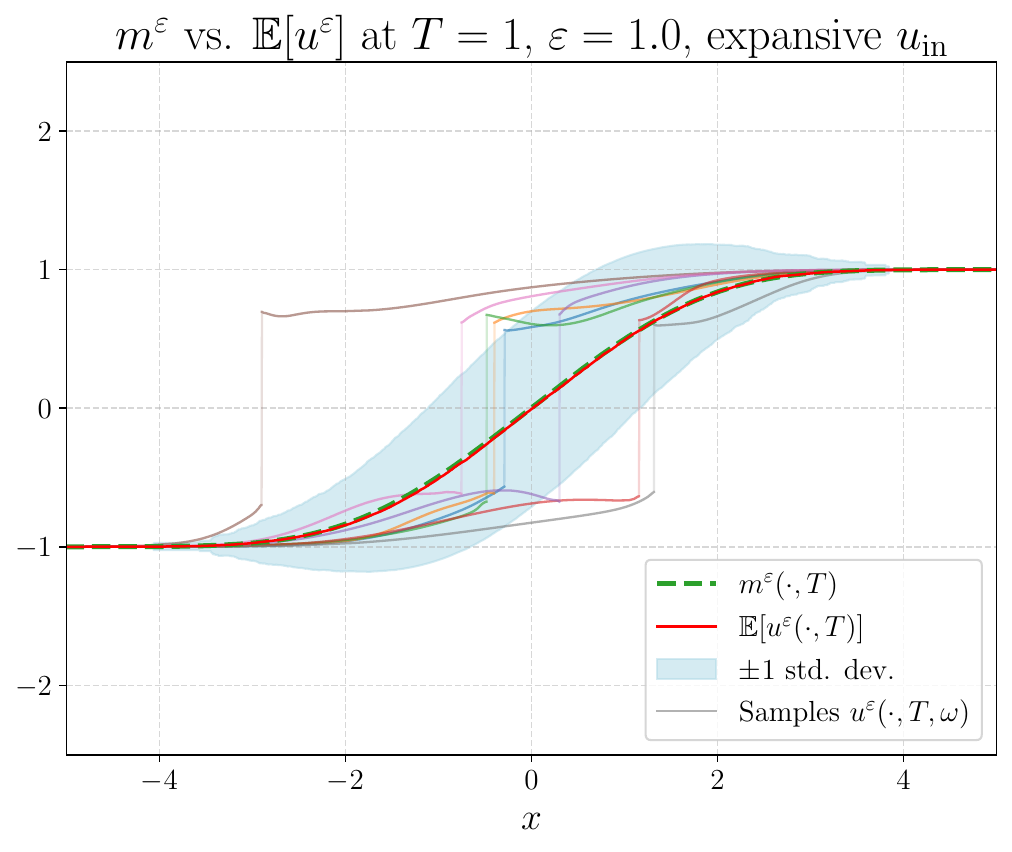}
    \caption{Comparison between the deterministic viscous solution $m^\eps$ of \eqref{eq:burgers_example} and the sample mean $\E[u^\eps]$ of the stochastic mean-field Burgers equation \eqref{eq:mean_field_cl-burgers}, for $T = 1$ and $\eps = 1$, in the compressive case from Example~\ref{example:compressive} (left) and the expansive case from Example~\ref{example:expansive} (right). The dashed green line shows the viscous solution $m^\eps(x, T)$, and the solid red line shows the sample mean $\E[u^\eps(x, T)]$, computed from $N=5000$ Monte Carlo simulations. The shaded blue region represents one standard deviation around the mean, indicating the typical spread of the stochastic solutions (a few of which are plotted in faint lines).}
    \label{fig:expectation}
\end{figure}

\begin{example} \label{example:expansive}
In this example, we consider the ``expansive'' initial data
\begin{equation*}
    u_\initial(x) =
    \begin{cases}
        -1 & \text{if } x < 0, \\
        1 & \text{else},
    \end{cases}
\end{equation*}
again for the one-dimensional stochastic mean-field Burgers equation~\eqref{eq:mean_field_cl-burgers} and the \emph{LA SALT} Burgers equation~\eqref{eq:la-salt-burgers}.

In this case, the entropy solution $u$ of the inviscid Burgers equation (equation~\eqref{eq:burgers_example} with $\eps = 0$) is the rarefaction wave
\begin{equation*}
    u(x, t) = 
    \begin{cases}
        -1 & \text{if } x < -t, \\
        x/t & \text{if } -t < x < t, \\
        1 & \text{if } t < x,
    \end{cases}
\end{equation*}
and the viscous solution $m^\eps$ is a smooth approximation. Following Example~\ref{example:compressive}, we use these as velocity fields for~\eqref{eq:mean_field_cl-burgers} and ~\eqref{eq:la-salt-burgers}, respectively. This yields two distinct stochastic solutions: $u^\eps$, given by the pushforward $u_t^\eps = (X_t^\eps)_{\#} u_\initial$ along the flow $X^\eps$ of \eqref{eq:stochastic_flow_burgers}, and $v^\eps$, given by the composition $v^\eps_t(x) = u_\initial(Y^\eps_0(x, t))$ with the backward flow $Y^\eps_0(\cdot, t)$ of \eqref{eq:la-salt-characteristics}.

These solutions are illustrated in Figures~\ref{fig:expansive_convergence_figure_mean_field_cl} and~\ref{fig:expansive_convergence_figure_la_salt} for two different values of $\eps$.

\begin{figure}[htbp]
    \centering
    \includegraphics[width=0.8\textwidth]{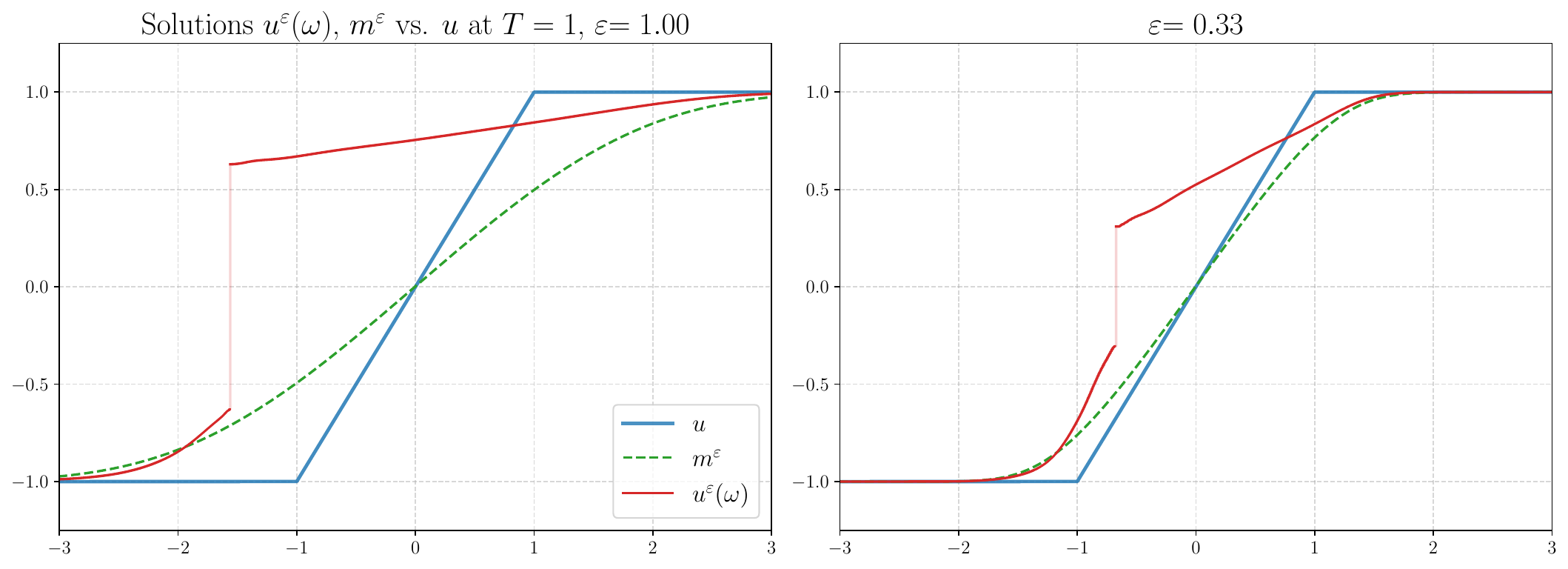}
    \caption{A sample path $u^\eps(\omega)$ of~\eqref{eq:mean_field_cl-burgers} with expansive shock initial data, plotted against the viscous solution $m^\eps$ of~\eqref{eq:burgers_example} and the entropy solution $u$, for $\eps = 1$ (left) and $\eps = 1/3$ (right).}
    \label{fig:expansive_convergence_figure_mean_field_cl}
\end{figure}

\begin{figure}[htbp]
    \centering
    \includegraphics[width=0.8\textwidth]{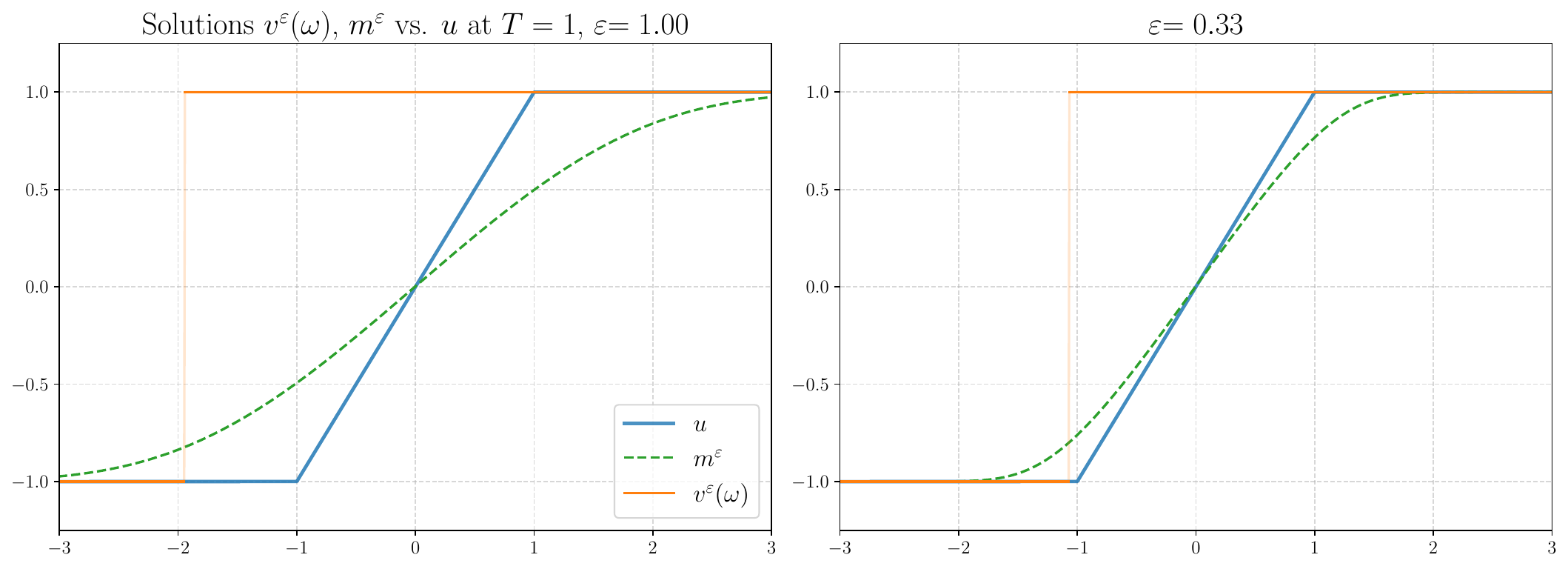}
    \caption{A sample path $v^\eps(\omega)$ of~\eqref{eq:la-salt-burgers} with expansive shock initial data, plotted against the viscous solution $m^\eps$ of~\eqref{eq:burgers_example} and the entropy solution $u$, for $\eps = 1$ (left) and $\eps = 1/3$ (right).}
    \label{fig:expansive_convergence_figure_la_salt}
\end{figure}

The failure of the LA SALT Burgers equation is now apparent. Its solution $v^\eps$ is structurally constrained to the form $v^\eps(x, t) = u_\initial(Y^\eps_0(x, t))$, which merely applies a random mapping to the initial discontinuity. Consequently, this solution is incapable of approximating the rarefaction wave in the zero-noise limit.
\end{example}

\section{Preliminaries} \label{sec:prelims}

In this section, we collect definitions, notations, and basic results that will be used throughout the paper.

\subsection{Scalar conservation laws and particle paths} \label{subsec:particle_paths}

A function $u\in L^\infty(\R^d\times(0, T))$ is a \emph{weak solution} of the scalar conservation law~\eqref{eq:cl} if
\begin{equation*}
    \int_{0}^T \int_{\R^d} u\partial_t\varphi + f(u)\cdot \nabla \varphi\,dx\,dt + \int_{\R^d} u_\initial(x)\varphi(x,0)\,dx = 0
\end{equation*}
for all $\varphi\in C_c^\infty(\R^d \times [0, T))$. We say that a pair of functions $(\eta,q)$ is an entropy pair if $\eta\colon\R\to\R$ is convex and $q\colon\R\to\R^d$ satisfies $q' = \eta'f'$. A weak solution $u$ is an \emph{entropy solution} if
\begin{equation*}
    \int_0^T \int_{\R^d} \eta(u)\partial_t\varphi + q(u)\cdot \nabla \varphi\,dx\,dt + \int_{\R^d} \eta(u_\initial(x))\varphi(x,0)\,dx \geq 0
\end{equation*}
for all $0\leq \varphi \in C_c^\infty(\R^d\times[0, T))$ and all entropy pairs $(\eta, q)$. Kruzkhov~\cite{kruzkov_1970} proved that there exists a unique entropy solution of~\eqref{eq:cl} for any $u_\initial \in L^\infty(\R^d)$. See Kruzkhov~\cite{kruzkov_1970} and the monographs by Holden~and~Risebro~\cite{holden_risebro_2015} and Dafermos~\cite{dafermos_2016} for the general theory of hyperbolic conservation laws.

In one spatial dimension, if $u_\initial\in \BV_\loc(\R)$, then also $u(t) \in \BV_\loc(\R)$ for all $t>0$. Moreover, there is a special relationship between the entropy solution and the particle paths, captured by the following theorem.

\begin{theorem}[Fjordholm, Mæhlen, Ørke~\cite{fjordholm_maehlen_oerke}] \label{thm:particle_paths}
    Let $f\in C^1(\R)$ and $u_\initial\in \BV_{\loc}\cap L^\infty(\R)$. If ${u\in L^\infty(\R\times\R_+)}$ is a weak solution of~\eqref{eq:cl} with $u(t) \in \BV_{\loc}(\R)$ for a.e.~$t\geq 0$, then the following are equivalent:
    \begin{enumerate}[label=(\roman*)]
    \item $u$ is the entropy solution of~\eqref{eq:cl}.
    \item The ODE
    \begin{equation} \label{eq:flow_with_k}
        \begin{cases}
            \tfrac{d}{dt}{X}_t = a_k\bigl(u(X_t,t)\bigr) &\text{for}\ t > s, \\
            X_s = x
        \end{cases} \qquad \text{where } a_k(u)\coloneqq  \frac{f(u) - f(k)}{u - k}
    \end{equation}
    is well-posed in the Filippov sense (see Section \ref{subsec:filippov_solutions}) for all $x \in \R$, $s \geq 0$ and all $k \in \R$.
    \end{enumerate}
    Moreover, for any $k \in \R$, the entropy solution $u$ satisfies $u(t) = k + (X^k_t)_\#(u_\initial - k)$ for all $t \geq 0$, i.e.
    \begin{equation*}\label{eq:representation_formula}
         \int_{\R} \vartheta(x) u(x, t)\, dx = k \int_{\R} \vartheta(x)\, dx + \int_{\R} \vartheta\bigl(X_t^k(x)\bigr) (u_\initial(x) - k)\, dx
    \end{equation*}
    for all $\vartheta \in C_c(\R)$ and $t \geq 0$, where $X^k=X^k_t(x)$ is the unique Filippov flow of the ODE~\eqref{eq:flow_with_k}.
\end{theorem}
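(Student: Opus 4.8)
The plan is to view the ODE~\eqref{eq:flow_with_k} as the characteristic equation of a \emph{continuity equation}. Fix $k\in\R$ and set $v\coloneqq u-k$. Since $a_k(u)\,(u-k)=f(u)-f(k)$, a short computation shows that $u$ is a weak solution of~\eqref{eq:cl} if and only if $v$ is a weak solution of $\partial_t v+\partial_x\bigl(V_k\,v\bigr)=0$, where $V_k(x,t)\coloneqq a_k(u(x,t))$ is bounded and, for a.e.\ $t$, of bounded variation in $x$; and~\eqref{eq:flow_with_k} is exactly $\dot X_t=V_k(X_t,t)$. Both assertions of the theorem then rest on two facts about this pairing: (a) when the forward Filippov flow $X^k$ is unique, the pushforward $(X^k_t)_\#(u_\initial-k)$ is \emph{the} bounded solution of the continuity equation for $v$, whence $u-k=(X^k_t)_\#(u_\initial-k)$; and (b) Filippov well-posedness of $\dot X_t=V_k(X_t,t)$ for every $k$ is equivalent to the Oleinik $E$-condition at every discontinuity of $u$.

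For $(i)\Rightarrow(ii)$ and the representation formula, one uses that a $\BV$ entropy solution decomposes, for a.e.\ time, into points of approximate continuity and an at-most-countable set of shock curves, the latter obeying Oleinik's $E$-condition. At a shock $(u_-,u_+)$ with speed $\sigma=\tfrac{f(u_-)-f(u_+)}{u_--u_+}$, the $E$-condition is equivalent, for every $k$ strictly between $u_-$ and $u_+$, to $a_k(u_+)\leq\sigma\leq a_k(u_-)$, i.e.\ to the shock being \emph{compressive} for $V_k$: characteristics of~\eqref{eq:flow_with_k} enter it from both sides and cannot escape, so any Filippov trajectory reaching the shock is absorbed and stays, forcing forward uniqueness. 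For $k$ outside the closed interval spanned by $u_-,u_+$ the two relative velocities $a_k(u_\pm)-\sigma$ have the same sign, so forward uniqueness at the shock is automatic, and at points of approximate continuity it follows from a standard Filippov argument. Once $X^k$ is well-defined, fact (a) applies and yields the identity $u-k=(X^k_t)_\#(u_\initial-k)$, i.e.\ the stated representation formula.

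For the converse $(ii)\Rightarrow(i)$, assume~\eqref{eq:flow_with_k} is Filippov-well-posed for every $k$. Fact (a) still gives $u-k=(X^k_t)_\#(u_\initial-k)$ for each $k$. Split $u_\initial-k=(u_\initial-k)_+-(u_\initial-k)_-$ and push both nonnegative parts forward: $\mu^\pm_t\coloneqq(X^k_t)_\#(u_\initial-k)_\pm\geq0$ each solve the continuity equation for $V_k$, with $u-k=\mu^+_t-\mu^-_t$, so the overlap $\theta_t\coloneqq\mu^+_t\wedge\mu^-_t=\mu^+_t-(u-k)_+(t)$ is a nonnegative measure that vanishes at $t=0$. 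Because forward uniqueness makes trajectory merging irreversible, overlapping mass of opposite sign can never separate again, so $\theta$ is a \emph{supersolution}: $\partial_t\theta+\partial_x(V_k\theta)\geq0$. Since $|u-k|=\mu^+_t+\mu^-_t-2\theta_t$ while $\mu^+_t+\mu^-_t$ solves the continuity equation exactly, and since $V_k|u-k|=\sgn(u-k)(f(u)-f(k))$, subtracting gives $\partial_t|u-k|+\partial_x\bigl(\sgn(u-k)(f(u)-f(k))\bigr)\leq0$ (with the correct initial term), which is Kruzkov's entropy inequality; as $k$ was arbitrary, $u$ is the entropy solution.

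I expect the main obstacle to be fact (a): rigorously establishing that the forward Filippov flow of the merely bounded, $\BV$-in-space (and generally time-irregular) field $V_k$ is measurable in the initial point and that its pushforward is the \emph{unique} bounded solution of the continuity equation. Lipschitz and DiPerna--Lions theory do not apply, and trajectories genuinely merge, so one needs a bespoke uniqueness theory for continuity equations with Filippov-regular drift — or an approximation argument (vanishing viscosity or wave-front tracking) in which the regularized characteristics converge to the Filippov flow, along which the supersolution property of $\theta$ used in the converse direction is also most cleanly obtained. A secondary technical point is to rule out, or otherwise absorb, a Cantor part of $Du(t)$ so that the ``approximate continuity versus shock'' dichotomy invoked above is genuinely exhaustive.
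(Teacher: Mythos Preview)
The paper does not contain a proof of this theorem. Theorem~\ref{thm:particle_paths} is stated in the preliminaries as a result quoted from the authors' earlier work~\cite{fjordholm_maehlen_oerke}; no argument is given here, only the statement and a reference. Consequently there is nothing in the present paper to compare your proposal against.

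That said, your sketch is broadly in the right spirit. Recasting~\eqref{eq:flow_with_k} as the characteristic ODE for the continuity equation $\partial_t v+\partial_x(V_k v)=0$ with $v=u-k$ is exactly the structural observation that underlies the representation formula, and the equivalence between Filippov forward uniqueness at a jump and Oleinik's $E$-condition is the correct mechanism for the $(i)\Leftrightarrow(ii)$ equivalence. You also correctly flag the genuine difficulty: establishing that the pushforward along the Filippov flow is the \emph{unique} bounded solution of the continuity equation when the drift is only $L^\infty_t\BV_x$. The supersolution argument for $\theta_t=\mu_t^+\wedge\mu_t^-$ in the converse direction is plausible but, as written, heuristic: the claim that ``overlapping mass of opposite sign can never separate again'' needs a rigorous formulation (e.g.\ via monotonicity of $X^k_t$ in the initial point, which holds in one dimension by a Filippov comparison argument), and the inequality $\partial_t\theta+\partial_x(V_k\theta)\geq0$ does not follow from forward uniqueness alone without further work. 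If you want to turn this into a complete proof you will need to consult~\cite{fjordholm_maehlen_oerke} directly for how these points are handled there.
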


Filippov solutions generalize the standard solution concept for equations with discontinuous right-hand sides; see Section~\ref{sec:filippov} and Filippov's original work~\cite{Filippov60}. For an example of an application of Theorem~\ref{thm:particle_paths}, we refer the reader to our work in~\cite{oerke_numerics_2025} on the formulation and analysis of a numerical scheme for scalar conservation laws.

\subsection{Stochastic flows of diffeomorphisms}\label{subsec:stoch_flows}

Following the foundational work of Kunita~\cite{kunita, kunita_1990}, we give the following definition.

\begin{definition}[Kunita~\cite{kunita_1990}] \label{def:stoch_flow}
    Let $X = X_{s, t}(x, \omega)$, defined for $s, t \in [0, T]$ and $x \in \R^d$, be a continuous $\R^d$-valued random field on a probability space $(\Omega, \F, \P)$. It is a \emph{stochastic flow of homeomorphisms} if for $\P$-a.e.~$\omega \in \Omega$, the family $(X_{s, t}(\omega))_{s, t \in [0, T]}$ is a flow of homeomorphisms on $\R^d$, i.e.
    \begin{enumerate}[label=(\roman*)]
        \item $X_{s, t}(\omega) = X_{r, t}(\omega) \circ X_{s, r}(\omega)$ for all $s,r, t \in [0, T]$,
        \item $X_{s, s}(\omega, x) = x$ for all $s \in [0, T]$ and $x \in \R^d$,
        \item $X_{s, t}(\omega)\colon \R^d \to \R^d$ is a homeomorphism on $\R^d$ for all $s, t \in [0, T]$.
    \end{enumerate}
    We say that $X$ is a \emph{stochastic flow of $C^k$-\diffeomorphisms} if
    \begin{enumerate}[resume*]
        \item $X_{s, t}(\omega)$ is $k$ times differentiable w.r.t.~$x$, and the derivatives are continuous in $(x, s, t)$.
    \end{enumerate}
    If in addition to (iv) the derivatives are Hölder continuous with exponent $\beta$ with respect to $x$, we say that $X$ is a \emph{stochastic flow of $C^{k, \beta}$-\diffeomorphisms}.
\end{definition}

We define the forward flow as the restriction of $X_{s, t}$ to the forward temporal indices $0 \leq s \leq t \leq T$. Such a flow is said to be generated by the SDE
\begin{equation} \label{eq:sde}
    \begin{cases}
        dX_t = b(X_t, t)\, dt + dW_t &\text{for}\ t > s, \\
        X_s = x,
    \end{cases}
\end{equation}
if it is a modification of the corresponding family of solutions $(X_t^{x, s} \colon 0 \leq s \leq t \leq T,\ x \in \R^d)$. Here, each solution $X_t^{x, s}$ is adapted to the underlying filtration $\F_{s, t}$, which we take to be the completed $\sigma$-algebra generated by $(W_u - W_r)_{s \leq r \leq u \leq t}$.

It is a classical result by Kunita~\cite{kunita, kunita_1990} that SDEs with regular drifts generate stochastic flows of \diffeomorphisms. We will require lower regularity than in Kunita's work, in particular drifts belonging to the space $L^q((0, T); C^{0, \alpha}(\R^d))$, defined as the closure of $C^\infty_c((0, T); C^{0, \alpha}(\R^d))$ in the norm
\begin{equation*}
    \norm{b}_{L^q((0, T); C^{0, \alpha}(\R^d))} = \biggl(\int_0^T \norm{b(t)}_{C^{0, \alpha}(\R^d)}^q\, dt\biggr)^{\frac{1}{q}}
\end{equation*}
(see~\cite{krylov_cz_2002} for more information about these spaces). We will need the following result.

\begin{theorem}[Ørke~\cite{oerke_2025}]\label{thm:well_posed_sde}
     Let $b \in L^q\bigl((0, T); C^{0, \alpha}(\R^d; \R^d)\bigr)$ for some $q \in [2, \infty)$ and $\alpha \in (0, 1)$. Then for any $x \in \R^d$ and $s \in [0, T]$, the SDE~\eqref{eq:sde} has a unique (strong) solution $(X_t^{x, s})_{t \in [s, T]}$. Moreover, the family of solutions ${(X_t^{x, s}\colon 0 \leq s \leq t \leq T,\, x \in \R^d)}$ has a modification, denoted by $X = X_{s, t}(x, \omega)$, which for all $\beta < \alpha$ is a forward stochastic flow of $C^{1, \beta}$-\diffeomorphisms on $\R^d$.
\end{theorem}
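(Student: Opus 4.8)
The plan is to remove the irregular drift $b$ by the Zvonkin--Itô--Tanaka change of variables, reducing~\eqref{eq:sde} to an SDE with (spatially) Lipschitz coefficients, for which strong well-posedness and the flow property are classical. \emph{Step 1: the Zvonkin PDE.} Fix a parameter $\lambda>0$ and solve, \componentwise, the backward parabolic system
\begin{equation*}
    \partial_t u + \tfrac12\Delta u + b\cdot\nabla u - \lambda u = -b \ \text{ on } \R^d\times(0,T),\qquad u(\cdot,T)=0.
\end{equation*}
The heat semigroup gains two spatial derivatives, so maximal regularity in the mixed-norm space $L^q((0,T);C^{0,\alpha}(\R^d))$ sends a right-hand side in $L^q C^{0,\alpha}$ to a solution with $\nabla^2 u,\partial_t u\in L^q C^{0,\alpha}$; since $q\geq2$ the parabolic trace gives $\nabla u\in C([0,T];C^{0,\alpha}(\R^d))$, so that the term $b\cdot\nabla u$ stays in $L^q C^{0,\alpha}$ and a contraction argument produces a unique $u\in L^q((0,T);C^{2,\alpha}(\R^d;\R^d))$ with $\partial_t u\in L^q((0,T);C^{0,\alpha}(\R^d;\R^d))$. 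Crucially, the resolvent carries a negative power of $\lambda$ on the gradient, $\norm{\nabla u}_{L^\infty(\R^d\times(0,T))}\lesssim\lambda^{-\delta}\norm{b}_{L^q C^{0,\alpha}}$ for some $\delta>0$; fixing $\lambda$ so large that $\norm{\nabla u}_{L^\infty}\leq\tfrac12$, the map $\Phi_t\coloneqq\mathrm{id}+u(\cdot,t)$ is for each $t$ a global $C^{1,\beta}$-\diffeomorphism of $\R^d$, with $\Phi_t^{-1}$ of the same regularity, uniformly in $t$.

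\emph{Step 2: the transformed SDE and strong well-posedness.} Let $X^{x,s}$ be any solution of~\eqref{eq:sde}. Applying the Itô--Wentzell formula to $Y_t\coloneqq\Phi_t(X_t^{x,s})=X_t^{x,s}+u(t,X_t^{x,s})$ and using the PDE, the drift $b$ cancels and
\begin{equation*}
    dY_t=\lambda u\bigl(t,\Phi_t^{-1}(Y_t)\bigr)\,dt+\bigl(I+\nabla u\bigr)\bigl(t,\Phi_t^{-1}(Y_t)\bigr)\,dW_t,\qquad Y_s=\Phi_s(x).
\end{equation*}
Writing $\tilde b\coloneqq\lambda\,u\circ\Phi^{-1}$ and $\tilde\sigma\coloneqq(I+\nabla u)\circ\Phi^{-1}$, one checks from $u\in L^q C^{2,\alpha}$ and $\Phi_t^{\pm1}\in C^{1,\alpha}$ that $\tilde b(t,\cdot),\tilde\sigma(t,\cdot)$ are Lipschitz for a.e.\ $t$ with Lipschitz constants in $L^q((0,T))$; since $q\geq2$ this suffices, via Gronwall combined with the Burkholder--Davis--Gundy inequality, for pathwise uniqueness, and via Picard iteration for the existence of a unique strong solution $Y^{\Phi_s(x),s}$. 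Reading the substitution backwards, $X_t^{x,s}\coloneqq\Phi_t^{-1}(Y_t)$ solves~\eqref{eq:sde}, giving existence of a strong solution; and if $X,X'$ both solve~\eqref{eq:sde} with the same data then $\Phi_\cdot(X_\cdot)$ and $\Phi_\cdot(X'_\cdot)$ solve the transformed SDE with the same initial value, hence agree, hence $X\equiv X'$.

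\emph{Step 3: the flow of \diffeomorphisms.} Because $\tilde b,\tilde\sigma$ have first spatial derivatives in $L^q((0,T);C^{0,\alpha}(\R^d))$ with $q\geq2$, the family $(Y_t^{y,s})$ admits a modification $Y=Y_{s,t}(y,\omega)$ that is a forward stochastic flow of $C^{1,\beta}$-\diffeomorphisms for every $\beta<\alpha$: one differentiates the flow equation in $y$, bounds $\E\sup_{t}\norm{\nabla_yY_{s,t}}^p$ and the $\beta$-Hölder modulus of $y\mapsto\nabla_yY_{s,t}$ by Gronwall and Burkholder--Davis--Gundy using the $L^q$-in-time, $C^{0,\alpha}$-in-space control of $\nabla\tilde b,\nabla\tilde\sigma$, and concludes by the Kolmogorov continuity theorem in $(y,s,t)$; invertibility and $C^{1,\beta}$-regularity of the inverse come from the same estimates for the backward flow, as in Kunita~\cite{kunita_1990}. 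Finally set
\begin{equation*}
    X_{s,t}(x,\omega)\coloneqq\Phi_t^{-1}\!\bigl(Y_{s,t}(\Phi_s(x),\omega)\bigr).
\end{equation*}
Since $\Phi_t,\Phi_t^{-1}$ are deterministic $C^{1,\alpha}$-\diffeomorphisms depending continuously on $t$, $X$ is again a forward stochastic flow of $C^{1,\beta}$-\diffeomorphisms for all $\beta<\alpha$, with the cocycle identity and $X_{s,s}=\mathrm{id}$ inherited from $Y$ via $\Phi_r^{-1}\circ\Phi_r=\mathrm{id}$; by Step~2 it is a modification of the solution family of~\eqref{eq:sde}.

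\emph{Main obstacle.} The technical core is Step~1: proving the anisotropic Schauder estimates in $L^q_tC^{0,\alpha}_x$ together with the sharp $\lambda$-decay of $\norm{\nabla u}_{L^\infty}$, and then rigorously justifying the Itô--Wentzell formula in Step~2 for a field $u$ that is only Sobolev in time (one mollifies in space, applies the classical formula, and passes to the limit using the PDE and dominated convergence). The flow bookkeeping in Step~3 — tracking how the $C^{0,\alpha}$ spatial regularity of $\nabla\tilde\sigma$, available only in an $L^q$-in-time sense, still yields $C^{1,\beta}$-regularity for every $\beta<\alpha$ — is comparatively standard but must be carried out carefully to get the stated conclusion.
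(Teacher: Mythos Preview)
The paper does not prove Theorem~\ref{thm:well_posed_sde}; it is quoted without proof from~\cite{oerke_2025}, so there is nothing in the present paper to compare your argument against.

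That said, your Zvonkin--Itô--Wentzell strategy is exactly the standard route for results of this type (cf.~Flandoli--Gubinelli--Priola~\cite{flandoli_gubinelli_priola} for $b\in L^\infty_tC^{0,\alpha}_x$, and the extensions to $L^q_t$ in time that you are implicitly invoking), and the outline is correct: solve the resolvent equation to build a $C^{1,\alpha}$-\diffeomorphism $\Phi_t$ with $\|\nabla u\|_\infty\leq\tfrac12$, transform to an SDE whose coefficients have one more spatial derivative than $b$, and read off the $C^{1,\beta}$-flow from Kunita-type estimates on the transformed system. The points you flag as obstacles are the right ones. I would add two cautions. First, the claim ``$\nabla u\in C([0,T];C^{0,\alpha})$'' from the parabolic trace requires $q\geq 2$ together with an interpolation/embedding argument in the anisotropic scale; make sure you state precisely which embedding you use, since for general $q$ the trace lands only in a Besov space and you need enough to keep $b\cdot\nabla u\in L^qC^{0,\alpha}$. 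Second, in Step~3 the coefficients $\tilde b,\tilde\sigma$ have spatial derivatives only in $L^q((0,T);C^{0,\alpha})$, not $L^\infty_t$, so the Kolmogorov--Kunita argument must be run with $L^q$-in-time Lipschitz/H\"older constants; this is fine because $q\geq2$ lets BDG absorb the stochastic integral, but the standard references (e.g.~\cite{kunita_1990}) assume time-continuous coefficients, so you should either cite a version that covers the $L^q_t$ case or sketch the modification.
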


\subsection{Linear stochastic continuity equations}

On a probability space ${(\Omega, \F, \P)}$ with a a $d$-dimensional Brownian motion $(W_t)_{t \geq 0}$ with respect to a given complete and right-continuous filtration $(\F_t)_{t \geq 0}$, consider the linear stochastic continuity equation
\begin{equation} \label{eq:stoch_continuity_linear}
\begin{cases}
    du + \nabla \cdot (b u)\, dt + \nabla u \circ dW_t = 0 &\text{in}\ \R^d \times (0, T),\\
    u|_{t = 0} = u_\initial & \text{on}\ \R^d.
\end{cases}
\end{equation}
As is often the case, this linear model is a foundational component for solving the more complex nonlinear equations presented in the introduction. Weak solutions will be understood in the following sense.

\begin{definition} \label{def:stoch_cont_weak}
    Let $u_\initial \in L^\infty(\R^d)$. A \emph{weak solution} of~\eqref{eq:stoch_continuity_linear} is a random field ${u \in L^\infty(\R^d \times (0, T) \times \Omega)}$ such that for all $\vartheta \in C^\infty_c(\R^d)$, the stochastic process ${t \mapsto \int_{\R^d} \vartheta(x)u_t(x)\, dx}$ is a continuous $\F_t$-\semimartingale and satisfies
    \begin{equation*}
        \int_{\R^d} \vartheta u_t\, dx = \int_{\R^d} \vartheta u_\initial\, dx + \int_0^t \int_{\R^d} b(\cdot, r) u_r \cdot \nabla \vartheta\, dx\,dr + \int_0^t \biggl(\int_{\R^d} u_r \nabla \vartheta\, dx \biggr) \circ dW_r
    \end{equation*}
    for all $t \in [0, T]$, $\Pas$.
\end{definition}

\begin{remark}
    The continuity of the process $t \mapsto \int_{\R^d} \vartheta(x)u_t(x)\, dx$ is to be understood as the existence of a version with almost surely continuous sample paths for every $\vartheta \in C_c^\infty(\R^d)$.
\end{remark}

Of course, the definition of weak solutions can be written equivalently in Itô integral form.

\begin{lemma} \label{lemma:system_ito_form}
    Let $u_\initial \in L^\infty(\R^d)$. A process ${u \in L^\infty(\R^d \times (0, T) \times \Omega)}$ is a weak solution of~\eqref{eq:stoch_continuity_linear} if and only if for all $\vartheta \in C^\infty_c(\R^d)$, the process ${t \mapsto \int_{\R^d} \vartheta(x)u_t(x)\, dx}$ is continuous, $\F_t$-adapted, and satisfies 
    \begin{equation} \label{eq:mean_field_cl_weak_ito}
        \begin{aligned}
            \int_{\R^d} \vartheta u_t\, dx & = \int_{\R^d} \vartheta u_\initial\, dx + \int_0^t \int_{\R^d} b(\cdot, r) u_r \cdot \nabla \vartheta\, dx\,dr \\
            & \quad + \int_0^t \biggl(\int_{\R^d} u_r \nabla \vartheta\, dx \biggr)\cdot dW_r + \frac{1}{2} \int_0^t \int_{\R^d} u_r \Delta \vartheta\, dx\,dr
        \end{aligned}
    \end{equation}
    for all $t \in [0, T]$, $\Pas$.
\end{lemma}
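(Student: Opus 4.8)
The plan is to convert the Stratonovich integral in Definition \ref{def:stoch_cont_weak} into an Itô integral by computing the associated correction term. Fix $\vartheta \in C_c^\infty(\R^d)$ and write $M_t^i \coloneqq \int_{\R^d} u_r(x) \partial_i\vartheta(x)\, dx$ for the integrand processes, so that the Stratonovich term appearing in Definition \ref{def:stoch_cont_weak} is $\sum_{i=1}^d \int_0^t M_r^i \circ dW_r^i$. The conversion formula states that $\int_0^t M_r^i \circ dW_r^i = \int_0^t M_r^i\, dW_r^i + \tfrac12 [M^i, W^i]_t$, so the key computation is to identify the cross-variation $[M^i, W^i]_t$ and show it equals $\int_0^t \bigl(\int_{\R^d} u_r \partial_i\vartheta\, dx\bigr)' $-type terms—more precisely, that $\sum_i [M^i, W^i]_t = \int_0^t \int_{\R^d} u_r \Delta\vartheta\, dx\, dr$.

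First I would note that, for a weak solution in the sense of Definition \ref{def:stoch_cont_weak}, the process $t \mapsto \int_{\R^d}\vartheta u_t\, dx$ is already assumed to be a continuous semimartingale for every test function, and this applies in particular with $\vartheta$ replaced by $\partial_i\vartheta$ (which is again in $C_c^\infty(\R^d)$). Applying Definition \ref{def:stoch_cont_weak} with test function $\partial_i\vartheta$ gives a semimartingale decomposition of $M^i$: its bounded-variation part is $\int_0^t\int_{\R^d} b\cdot\nabla(\partial_i\vartheta)\, u_r\, dx\, dr$ and its martingale part is $\sum_{j}\int_0^t\bigl(\int_{\R^d} u_r\partial_j\partial_i\vartheta\, dx\bigr)\circ dW_r^j$, whose Itô form has martingale part $\sum_j \int_0^t\bigl(\int_{\R^d} u_r \partial_i\partial_j\vartheta\, dx\bigr) dW_r^j$. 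Since the bounded-variation part contributes nothing to the cross-variation with $W^i$, we get $[M^i, W^i]_t = \int_0^t \bigl(\int_{\R^d} u_r \partial_i\partial_i\vartheta\, dx\bigr) dr$ by the Itô isometry / Kunita–Watanabe identity $[\int H\, dW^j, W^i] = \delta_{ij}\int H\, dr$. Summing over $i$ yields $\sum_i [M^i,W^i]_t = \int_0^t\int_{\R^d} u_r \Delta\vartheta\, dx\, dr$, and the factor $\tfrac12$ from the Stratonovich-to-Itô conversion produces exactly the last term in \eqref{eq:mean_field_cl_weak_ito}. The converse direction (Itô form $\Rightarrow$ Stratonovich form) is the same computation run backwards: given \eqref{eq:mean_field_cl_weak_ito}, one checks the process is a semimartingale and that reassembling the Itô stochastic integral plus half the cross-variation recovers the Stratonovich integral, so Definition \ref{def:stoch_cont_weak} holds.

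The only mild subtlety—and the step I would be most careful about—is justifying that $M^i$ is genuinely a semimartingale with the claimed decomposition, so that the Stratonovich integral $\int M^i \circ dW^i$ is even well-defined and the conversion formula applies. This is not automatic from membership $u \in L^\infty(\R^d\times(0,T)\times\Omega)$ alone; it uses precisely the standing hypothesis in Definition \ref{def:stoch_cont_weak} that $t\mapsto\int_{\R^d}\psi u_t\, dx$ is a continuous semimartingale for \emph{every} $\psi \in C_c^\infty(\R^d)$, together with the fact that its semimartingale characteristics are pinned down by the equation itself (applied to $\psi = \partial_i\vartheta$). Once this bootstrapping observation is in place, everything else is a routine application of standard stochastic calculus (the Stratonovich–Itô correction, bilinearity of quadratic covariation, and $[W^i,W^j] = \delta_{ij}t$), and no regularity beyond $L^\infty$ on $u$ and smoothness of $\vartheta$ is needed.
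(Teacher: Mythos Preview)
The paper states this lemma without proof, treating the Stratonovich--Itô conversion as routine. Your proposal is correct and supplies exactly the standard argument one would expect here: apply the definition with test function $\partial_i\vartheta$ to obtain the semimartingale decomposition of $M^i_t = \int_{\R^d} u_t\,\partial_i\vartheta\,dx$, read off its local martingale part, and use $[W^i,W^j]_t = \delta_{ij}t$ to compute the correction. Your remark that the Stratonovich correction only alters the finite-variation part (so that the local martingale part of $M^i$ can be identified without first fully converting the equation for $M^i$) is precisely the observation that prevents circularity.
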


The assumptions of Theorem~\ref{thm:well_posed_sde}, which ensure the existence of a stochastic flow of \diffeomorphisms, also allow the stochastic continuity equation~\eqref{eq:stoch_continuity_linear} to be solved via the method of characteristics.

\begin{theorem}[{\O}rke~\cite{oerke_2025}] \label{thm:linear_continuity}
    Let $T > 0$, let $u_\initial \in L^\infty(\R^d)$, and assume that the velocity field $b$ belongs to $L^q\bigl((0, T); C^{0, \alpha}(\R^d; \R^d)\bigr)$ for some $q \in [2, \infty)$ and $\alpha \in (0, 1)$ Then the linear stochastic continuity equation~\eqref{eq:stoch_continuity_linear} has a unique weak solution $u \in L^\infty((0, T)\times \R^d \times \Omega)$, given by $u_t = (X_t)_{\#} u_\initial$, that is,
    \begin{equation*}
    \begin{aligned}
        \int_{\R^d} \vartheta(x) u_t(x)\, dx & = \int_{\R^d} \vartheta(X_t(x)) u_\initial(x)\, dx \\
        & = \int_{\R^d} \vartheta(y) u_\initial\bigl(X_t^{-1}(y)\bigr) \det\bigl(\nabla X_t^{-1}(y) \bigr)\, dy
    \end{aligned}
    \end{equation*}
    for all $\vartheta \in C^\infty_c(\R^d)$ and for all $t \in [0, T]$, $\Pas$, where $X$ is the stochastic flow of \diffeomorphisms generated by~\eqref{eq:sde}.
\end{theorem}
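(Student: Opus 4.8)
The plan is to obtain existence from the explicit pushforward formula and uniqueness by transporting an arbitrary weak solution back along the flow, after a mollification that introduces a commutator error. For existence, let $X$ be the stochastic flow of $C^{1,\beta}$-\diffeomorphisms generated by~\eqref{eq:sde}, furnished by Theorem~\ref{thm:well_posed_sde}, and set $u_t\coloneqq(X_t)_\#u_\initial$, i.e.\ $u_t = (u_\initial\circ X_t^{-1})\det\nabla X_t^{-1}$. The bound $u\in L^\infty(\R^d\times(0,T)\times\Omega)$ reduces to an essential bound on $\det\nabla X_t^{-1}$, which is part of the Jacobian estimates underlying Theorem~\ref{thm:well_posed_sde}. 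To check that $u$ is a weak solution, fix $\vartheta\in C_c^\infty(\R^d)$ and apply Itô's formula to $t\mapsto\vartheta(X_t(x))$ with $x$ frozen, using $dX_t = b(X_t,t)\,dt + dW_t$; this produces a drift term carrying $b\cdot\nabla\vartheta$, a martingale term carrying $\nabla\vartheta$, and the Itô correction $\tfrac12\Delta\vartheta$. Multiplying by $u_\initial(x)$, integrating in $x$, and applying the stochastic Fubini theorem --- whose hypotheses hold since the integrands are pathwise compactly supported in $x$ with $u_\initial\in L^\infty$, the requisite moments coming from the Jacobian estimates --- moves the spatial integral inside the $dr$ and $dW_r$ integrals. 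A change of variables $y=X_r(x)$, legitimate because $X_r$ is a $C^{1,\beta}$-\diffeomorphism, turns $\int g(X_r(x))u_\initial(x)\,dx$ into $\int g(y)u_r(y)\,dy$ for $g\in\{\vartheta,\,b\cdot\nabla\vartheta,\,\Delta\vartheta\}$, and the resulting identity is exactly the Itô form~\eqref{eq:mean_field_cl_weak_ito}; by Lemma~\ref{lemma:system_ito_form}, $u$ is a weak solution, and the semimartingale continuity is clear from the representation.

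For uniqueness, by linearity it suffices to show that a weak solution $u$ with $u_\initial = 0$ vanishes. Mollify in space, $u^\delta_t\coloneqq u_t*\rho_\delta$; since convolution commutes with $\nabla$ and $\Delta$, the weak formulation becomes a bona fide equation --- strong in $x$, an Itô process in $t$ --- for the smooth field $u^\delta$,
\[
u^\delta_t = u^\delta_\initial - \int_0^t\bigl(\nabla\cdot(b u_r)\bigr)*\rho_\delta\,dr - \int_0^t\nabla u^\delta_r\cdot dW_r + \tfrac12\int_0^t\Delta u^\delta_r\,dr,
\]
which I rewrite as the mollified continuity equation plus the commutator $R^\delta_r\coloneqq\nabla\cdot(b u^\delta_r) - \bigl(\nabla\cdot(b u_r)\bigr)*\rho_\delta$. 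Composing with the flow and its Jacobian, I set $w^\delta_t(x)\coloneqq u^\delta_t(X_t(x))\det\nabla X_t(x)$ and use the Itô--Wentzell--Kunita formula together with the (noiseless, as the noise is additive) evolution of $\nabla X_t$ to obtain $dw^\delta_t(x) = \det\nabla X_t(x)\,R^\delta_t(X_t(x))\,dt$; here the martingale parts cancel by the Stratonovich chain rule --- the Brownian part of $dX_t$ being exactly the transport noise --- while the $b\cdot\nabla u^\delta$ drift cancels within $d[u^\delta_t(X_t)]$ and the $(\nabla\cdot b)u^\delta$ term cancels against the Liouville-type evolution of $\det\nabla X_t$. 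Integrating in time (with $w^\delta_0 = u_\initial*\rho_\delta = 0$), changing variables back, and sending $\delta\to0$ then forces $(u_t\circ X_t)\det\nabla X_t = 0$, hence $u_t = 0$ a.e., $\P$-a.s.

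The decisive difficulty is controlling the commutator as $\delta\to0$. The classical DiPerna--Lions/Ambrosio lemma yielding $R^\delta\to0$ in $L^1_{\loc}$ requires $b\in W^{1,1}_{\loc}$ (or $\BV_{\loc}$), whereas here $b$ is only Hölder continuous, and for such drifts the deterministic continuity equation can genuinely be ill-posed --- so the noise must enter. One has to exploit the enhanced regularity of the stochastic flow --- the $C^{1,\beta}$-bounds and moment estimates for $\nabla X_t^{\pm1}$, together with the parabolic smoothing carried by the $\tfrac12\Delta$ term in the Itô form of the Stratonovich noise --- to show that $(\det\nabla X_t)(R^\delta_t\circ X_t)$ nonetheless tends to $0$ in $L^1_{\loc}$, in expectation. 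A conceptually cleaner alternative would be an Itô--Tanaka/Zvonkin change of unknown removing the rough drift before mollifying. A secondary point requiring care is the evolution of $\det\nabla X_t$ when $b\notin C^1$; I would verify the cancellation first for smooth $b$ and pass to the limit using the stability of the flows in Theorem~\ref{thm:well_posed_sde}.
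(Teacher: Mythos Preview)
The paper does not prove this theorem at all: Theorem~\ref{thm:linear_continuity} is quoted as a preliminary result from~\cite{oerke_2025}, with no argument given in the present paper. So there is no ``paper's own proof'' to compare your proposal against.

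On the substance of your sketch: the existence half is fine and standard --- It\^o's formula applied to $\vartheta(X_t(x))$, stochastic Fubini, and the change of variables $y=X_r(x)$ do yield the It\^o form~\eqref{eq:mean_field_cl_weak_ito}. The uniqueness half, however, is not a proof but a diagnosis. You correctly identify that the DiPerna--Lions commutator lemma fails for merely H\"older $b$, and that the noise must be what rescues uniqueness; but you then offer two possible routes (exploit the $C^{1,\beta}$ flow regularity to control $(\det\nabla X_t)(R^\delta_t\circ X_t)$, or perform a Zvonkin-type transformation) without executing either. The first route in particular is not obviously workable as stated: the commutator $R^\delta$ involves $\nabla\cdot(bu^\delta)$ with $b$ only H\"older, and composing with the flow and multiplying by its Jacobian does not by itself produce the cancellation needed --- one would still need a genuine regularization-by-noise input, e.g.\ that the law of $X_t(x)$ has a density with enough integrability to average out the roughness of $b$, or a duality argument against the backward stochastic transport equation (which is where the Zvonkin/It\^o--Tanaka machinery typically enters). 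Your secondary concern about the evolution of $\det\nabla X_t$ when $b\notin C^1$ is real and would also need the approximation-and-stability argument you mention. As written, the uniqueness argument has a genuine gap at its core step.
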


\section{The stochastic mean-field conservation law} \label{sec:well_posedness}

This section is dedicated to proving the well-posedness of the main problem~\eqref{eq:mean_field_cl}, restated here for convenience:
\begin{equation} \label{eq:mean_field_cl_2}
    \begin{cases}
        du + \nabla \cdot \bigl(a(m) u\bigr)\, dt + \eps \nabla u \circ dW_t = 0,\\
        m = \E[u], \\
        u\bigr|_{t = 0} = u_\initial.
    \end{cases}
\end{equation}
The parameter $\eps > 0$ is fixed throughout the section, so we omit it from the superscript. Fix also a time horizon $T > 0$.

Our argument proceeds in two main steps. First, we formally take the expectation of~\eqref{eq:mean_field_cl_2} to derive a deterministic parabolic PDE for the mean field $m$:
\begin{equation} \label{eq:mean_field_2}
    \begin{cases}
        \partial_t m + \nabla \cdot f(m) = \tfrac{\eps^2}{2} \Delta m, \\
        m|_{t = 0} = u_\initial
    \end{cases}
\end{equation}
(recall that $a(v) \coloneqq \frac{f(v)-f(0)}{v}$). In Section~\ref{subsec:mean_field}, we show that this equation admits a unique solution and establish some crucial regularity properties. Second, in Section~\ref{subsec:existence_uniqueness}, we view~\eqref{eq:mean_field_cl_2} as a linear stochastic continuity equation where the velocity field $a(m)$ is fixed, and show its well-posedness. The proof is then closed by showing that the mean of the resulting stochastic solution, $\E[u]$, is consistent with our initial choice, that is, $\E[u] = m$.

\subsection{The mean field} \label{subsec:mean_field}

This section establishes key properties of the parabolic problem~\eqref{eq:mean_field_2}. While existence, uniqueness, and the maximum principle are standard results (see e.g.~\cite[Section~2.2]{malek_necas_1996} or~\cite[Appendix~B]{holden_risebro_2015}), we provide a detailed proof for the Hölder regularity of solutions.

\begin{definition} \label{def:mean_field_weak}
    Let $u_\initial \in L^\infty(\R^d)$. A \emph{weak solution} of equation~\eqref{eq:mean_field_2} is a function ${m \in L^\infty(\R^d \times (0, T))}$ which satisfies
    \begin{equation} \label{eq:mean_field_weak}
        \int_0^T \int_{\R^d} m \partial_t \varphi + f(m)\cdot \nabla \varphi + \frac{\eps^2}{2} m \Delta \varphi\, dx\,dt + \int_{\R^d} u_\initial(x) \varphi(x, 0)\, dx = 0
    \end{equation}
    for all $\varphi \in C^\infty_c(\R^d \times[0, T))$.
\end{definition}

Let $K_t^\eps$ denote the $d$-dimensional heat kernel with parameter $\eps > 0$, given by $K_t^\eps(x) \coloneqq (2 \pi \eps^2 t)^{-\nicefrac{d}{2}}\, \exp(-|x|^2/2 \eps^2 t)$. The following implicit solution formula is well-known.

\begin{lemma} \label{lemma:duhamel}
    Let $m$ be a weak solution of~\eqref{eq:mean_field_2}. Then it satisfies
    \begin{equation} \label{eq:duhamel}
        m(x, t) = (K_t^\eps * u_\initial)(x) + \int_0^t \bigl(\nabla K_{t-s}^\eps * f(m(s))\bigr)(x)\, ds
    \end{equation}
    for a.e.~$(x, t) \in \R^d \times [0, T]$.
\end{lemma}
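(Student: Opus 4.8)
The plan is to derive the integral representation directly from the weak formulation \eqref{eq:mean_field_weak} by testing against a suitable family of test functions built from the heat kernel, then arguing by approximation to remove the compact-support requirement. Concretely, fix $(x_0, t_0) \in \R^d \times (0, T)$ and a test point; for a mollification parameter, one chooses $\varphi(y, s) = \psi(y) K_{t_0 - s + \delta}^\eps(x_0 - y)$ truncated to have compact support, where $\psi$ is a smooth cutoff that is identically $1$ on a large ball. The key identity is that the backward heat kernel $s \mapsto K_{t_0 - s + \delta}^\eps(x_0 - \cdot)$ satisfies $\partial_s \varphi + \frac{\eps^2}{2}\Delta \varphi = 0$ (away from the cutoff region), so that plugging $\varphi$ into \eqref{eq:mean_field_weak} makes the first and third terms combine and telescope, leaving
\begin{equation*}
    \int_{\R^d} m(y, t_0) K_\delta^\eps(x_0 - y)\, dy = \int_{\R^d} u_\initial(y) K_{t_0 + \delta}^\eps(x_0 - y)\, dy + \int_0^{t_0} \int_{\R^d} f(m(y,s)) \cdot \nabla_y K_{t_0 - s + \delta}^\eps(x_0 - y)\, dy\, ds
\end{equation*}
modulo error terms from the cutoff $\psi$. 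Since $m \in L^\infty$ and the heat kernel and its gradient decay rapidly, the cutoff errors vanish as $\psi \nearrow 1$; this is the step that requires the standard Gaussian tail estimates $\norm{K_t^\eps}_{L^1} = 1$ and $\norm{\nabla K_t^\eps}_{L^1} \lesssim (\eps^2 t)^{-1/2}$, the latter ensuring the time integral $\int_0^{t_0}(t_0 - s + \delta)^{-1/2}\, ds$ is finite uniformly in $\delta$.

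Next I would let $\delta \to 0$. On the left-hand side, $K_\delta^\eps$ is an approximate identity, so $K_\delta^\eps * m(\cdot, t_0) \to m(\cdot, t_0)$ in $L^1_{\loc}$ and hence pointwise a.e.\ along a subsequence; this is where the ``for a.e.\ $(x, t)$'' in the statement enters — one gets the identity at Lebesgue points of $m(\cdot, t_0)$, and then for a.e.\ $t_0$ by Fubini. On the right-hand side, $K_{t_0+\delta}^\eps * u_\initial \to K_{t_0}^\eps * u_\initial$ pointwise by continuity of the heat semigroup (again using $u_\initial \in L^\infty$), and the Duhamel integral converges by dominated convergence, the dominating function being $\norm{f(m)}_{L^\infty} \norm{\nabla K_{t_0 - s}^\eps}_{L^1}$, which is integrable in $s$ on $(0, t_0)$ by the estimate above. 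Assembling these limits yields exactly \eqref{eq:duhamel}.

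The main obstacle is bookkeeping the cutoff and the near-singularity of $\nabla K$ at $s = t_0$ simultaneously: the test function $\varphi$ is not genuinely of the form $\psi \cdot K$ with $\partial_s \varphi + \frac{\eps^2}{2}\Delta\varphi = 0$ — the Leibniz rule produces cross terms $\frac{\eps^2}{2}(2\nabla\psi \cdot \nabla K + K \Delta\psi)$ supported where $\nabla\psi \neq 0$, i.e.\ far from $x_0$, which is precisely why Gaussian decay kills them, but one must be careful that these terms are controlled \emph{uniformly in $\delta$} as well. A clean alternative that sidesteps the cutoff entirely is to note that \eqref{eq:mean_field_weak} extends by density and the $L^\infty$ bound on $m$ to all $\varphi$ in a class of Schwartz-type functions (those with sufficient decay), and then take $\varphi(y,s) = K_{t_0 - s + \delta}^\eps(x_0 - y)$ directly; I would likely present it this way to keep the argument short, since it is indeed a ``well-known'' formula. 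Either way, no genuinely hard analysis is involved — the content is entirely the heat-kernel identity plus routine approximation.
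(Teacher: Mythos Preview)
Your proposal is correct and uses the same core idea as the paper---testing the weak formulation against a solution of the backward heat equation---but the organisation differs. The paper takes an arbitrary $\psi \in C_c^\infty(\R^d\times[0,T))$, sets
\[
\varphi(x,t) \coloneqq -\int_t^T \bigl(K_{s-t}^\eps * \psi(\cdot,s)\bigr)(x)\,ds,
\]
which solves $\partial_t\varphi + \tfrac{\eps^2}{2}\Delta\varphi = \psi$ with terminal condition $\varphi(\cdot,T)=0$, inserts this into \eqref{eq:mean_field_weak}, and then applies Fubini to obtain the \emph{weak} form of \eqref{eq:duhamel} tested against $\psi$; the a.e.\ pointwise identity follows by varying $\psi$. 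You instead test directly against the shifted kernel $K_{t_0-s+\delta}^\eps(x_0-\cdot)$ (morally $\psi = \delta_{(x_0,t_0)}$) and recover the pointwise formula via the approximate-identity limit $\delta\to 0$. The paper's route is a bit tidier---keeping $\psi$ smooth avoids the $\delta$-regularisation and the Lebesgue-point argument, and Fubini absorbs all the work in one step---while yours reaches the pointwise statement more directly at the cost of the extra limits and cutoff bookkeeping you identify. Both share the same mild technicality (the backward-heat test function is not compactly supported in space), which the paper's sketch leaves implicit and you address explicitly.
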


This identity can be established using the weak formulation of the equation: For arbitrary smooth $\psi$, one constructs an auxiliary test function
\begin{equation*}
    \varphi(x, t) \coloneqq -\int_t^T \bigl(K_{s-t}^\eps(\cdot) * \psi(\cdot, s)\bigr)(x)\, ds,
\end{equation*}
which is a solution of the backward heat equation $\partial_t \varphi + \frac{\eps^2}{2}\Delta \varphi = \psi$  with terminal condition $\varphi(x, T) \equiv 0$. Applying Fubini's theorem to the resulting expression then yields the weak formulation of~\eqref{eq:duhamel}.

\begin{theorem} \label{thm:mean_field}
    Let $u_\initial \in L^\infty(\R^d)$ and $f \in C^1(\R; \R^d)$. Then there exists a unique weak solution of~\eqref{eq:mean_field_2}. It satisfies
    \begin{enumerate}[label=(\roman*)]
        \item maximum principle: 
        \begin{equation*}
            \norm{m}_{L^{\infty}(\R^d \times (0, T))} \leq \norm{u_\initial}_{L^\infty(\R^d)} \qquad \text{for a.e.~} t > 0,
        \end{equation*}
        \item Hölder regularity:
        \begin{equation*}
            m \in L^q((0, T); C^{0, \beta}(\R^d)) \qquad \text{for } q \in [1, \infty),\ \beta\in\bigl(0,\min(1,\nicefrac2{q})\bigr)
        \end{equation*}
    \end{enumerate}
\end{theorem}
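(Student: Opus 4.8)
The plan is to settle existence, uniqueness and the maximum principle by classical arguments, and to devote the real work to the H\"older bound~(ii), which I would read off directly from the Duhamel representation of Lemma~\ref{lemma:duhamel}. For existence I would run a contraction-mapping argument for the Duhamel map
\[
    \Phi[m](x,t) \coloneqq (K_t^\eps * u_\initial)(x) + \int_0^t \bigl(\nabla K_{t-s}^\eps * f(m(s))\bigr)(x)\,ds
\]
on a ball of $C\bigl([0,\tau];L^\infty(\R^d)\bigr)$, using the heat-kernel bound $\norm{\nabla K_r^\eps}_{L^1(\R^d)} = C_d\,\eps^{-1} r^{-1/2}$, so that $\int_0^\tau \norm{\nabla K_r^\eps}_{L^1}\,dr = 2C_d\eps^{-1}\tau^{1/2}\to 0$ as $\tau\to 0$, together with the local Lipschitz continuity of $f$. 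Since the contraction time $\tau$ depends only on $\eps$, $f$ and $\norm{u_\initial}_{L^\infty}$, the local solution can be continued up to $T$ as soon as one knows it stays $L^\infty$-bounded, i.e.\ once~(i) is available; uniqueness then follows from a Gr\"onwall estimate on the difference of two Duhamel solutions. For~(i) I would approximate $u_\initial$ in $L^1_\loc$ by smooth data with the same $L^\infty$ bound, apply classical parabolic regularity and the classical maximum principle to the (smooth) approximants, and pass to the limit via~\eqref{eq:duhamel} and uniqueness; alternatively, one may test the weak formulation~\eqref{eq:mean_field_weak} with the Stampacchia truncation $(m-\norm{u_\initial}_{L^\infty})_+$, the flux term integrating to zero. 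These steps being standard, I would simply cite~\cite[Section~2.2]{malek_necas_1996} and~\cite[Appendix~B]{holden_risebro_2015}.

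For~(ii) I would work with the (for a.e.\ $t$) continuous-in-$x$ representative of $m(\cdot,t)$ supplied by the right-hand side of~\eqref{eq:duhamel}, and split it as $m(t) = K_t^\eps * u_\initial + I(t)$ with $I(t) \coloneqq \int_0^t \nabla K_{t-s}^\eps * f(m(s))\,ds$. Putting $M_f \coloneqq \max_{|v|\leq\norm{u_\initial}_{L^\infty}}|f(v)|$, which is finite since $f\in C^1$ and $\norm{m(t)}_{L^\infty}\leq\norm{u_\initial}_{L^\infty}$ by~(i), I would estimate
\begin{align*}
    \norm{\nabla I(t)}_{L^\infty(\R^d)}
    &\leq \int_0^t \norm{\nabla K_{t-s}^\eps}_{L^1(\R^d)}\,\norm{f(m(s))}_{L^\infty(\R^d)}\,ds \\
    &\leq C_d\,\eps^{-1} M_f \int_0^t (t-s)^{-1/2}\,ds
    = 2C_d\,\eps^{-1} M_f\, t^{1/2}
    \leq 2C_d\,\eps^{-1} M_f\, T^{1/2}.
\end{align*}
Since moreover $\norm{I(t)}_{L^\infty}\leq\norm{m(t)}_{L^\infty}+\norm{u_\initial}_{L^\infty}\leq 2\norm{u_\initial}_{L^\infty}$, the term $I$ is Lipschitz in $x$ with a constant uniform in $t\in(0,T]$, hence $I\in L^\infty\bigl((0,T);C^{0,1}(\R^d)\bigr)$: this part contributes nothing to the time singularity.

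It remains to treat the heat-semigroup term, which is the only one singular at $t=0$. There $\norm{K_t^\eps * u_\initial}_{L^\infty}\leq\norm{u_\initial}_{L^\infty}$ while $\norm{\nabla(K_t^\eps * u_\initial)}_{L^\infty}\leq\norm{\nabla K_t^\eps}_{L^1}\norm{u_\initial}_{L^\infty} = C_d\,\eps^{-1} t^{-1/2}\norm{u_\initial}_{L^\infty}$. Interpolating by means of the elementary inequality
\[
    \bigl[g\bigr]_{C^{0,\beta}(\R^d)} \leq \bigl(2\norm{g}_{L^\infty}\bigr)^{1-\beta}\,\norm{\nabla g}_{L^\infty}^{\beta}
    \qquad (0\leq\beta\leq1),
\]
where $[\cdot]_{C^{0,\beta}}$ denotes the $\beta$-H\"older seminorm and which follows from $|g(x)-g(y)|\leq\min\bigl(\norm{\nabla g}_{L^\infty}|x-y|,\,2\norm{g}_{L^\infty}\bigr)\leq \bigl(\norm{\nabla g}_{L^\infty}|x-y|\bigr)^{\beta}\bigl(2\norm{g}_{L^\infty}\bigr)^{1-\beta}$, I obtain $\bigl[K_t^\eps * u_\initial\bigr]_{C^{0,\beta}}\leq C(d,\eps,\beta)\norm{u_\initial}_{L^\infty}\, t^{-\beta/2}$, and hence $\norm{K_t^\eps * u_\initial}_{C^{0,\beta}(\R^d)}\lesssim 1 + t^{-\beta/2}$. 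As a function of $t$ this belongs to $L^q(0,T)$ whenever $q\beta/2<1$, which together with $\beta\leq1$ covers the whole stated range $\beta<\min(1,\nicefrac2q)$. Adding the two contributions yields $m\in L^q\bigl((0,T);C^{0,\beta}(\R^d)\bigr)$ for every $q\in[1,\infty)$ and $\beta\in\bigl(0,\min(1,\nicefrac2q)\bigr)$, which is~(ii).

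The argument is essentially bookkeeping layered on two standard heat-kernel bounds and one interpolation inequality, so I do not anticipate a genuine obstacle; the two places where a little care is needed are the rigorous maximum principle for merely $L^\infty$ initial data (which I would offload to the cited references) and the tracking of the $t^{-\beta/2}$ time singularity, since that is exactly what determines the admissible exponent range $\beta<\min(1,\nicefrac2q)$.
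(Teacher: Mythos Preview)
Your handling of existence, uniqueness, the maximum principle, and the heat-semigroup term $K_t^\eps*u_\initial$ is fine and matches the paper (which likewise just cites standard references for the first three). The gap is in the Duhamel integral term $I(t)=\int_0^t \nabla K_{t-s}^\eps * f(m(s))\,ds$: you have miscounted a derivative. The displayed inequality
\[
\norm{\nabla I(t)}_{L^\infty}\leq\int_0^t\norm{\nabla K_{t-s}^\eps}_{L^1}\,\norm{f(m(s))}_{L^\infty}\,ds
\]
is in fact a bound on $\norm{I(t)}_{L^\infty}$, not on $\norm{\nabla I(t)}_{L^\infty}$, since the gradient on $K$ is already built into the definition of $I$. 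One more spatial derivative gives $\nabla I(t)=\int_0^t \nabla^2 K_{t-s}^\eps*f(m(s))\,ds$, and since $\norm{\nabla^2 K_r^\eps}_{L^1}\sim (\eps^2 r)^{-1}$ this produces a non-integrable singularity at $s=t$; your argument does not establish that $I$ is Lipschitz, and hence the interpolation step you planned for the semigroup term cannot simply be skipped for $I$.

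The repair is minor and is exactly what the paper does: work at the H\"older level directly rather than passing through $\nabla I$. Applying a finite difference $\Delta_h$ and using the fractional kernel estimate $\norm{\Delta_h[\nabla K_r^\eps]}_{L^1}\lesssim |h|^\beta (\eps^2 r)^{-(1+\beta)/2}$ of Lemma~\ref{lemma:heat_estimates}, which \emph{is} integrable in $r$ for every $\beta<1$, gives
\[
[I(t)]_{C^{0,\beta}}\;\lesssim\; M_f\int_0^t (t-s)^{-(1+\beta)/2}\,ds \;\lesssim\; M_f\, t^{(1-\beta)/2},
\]
bounded on $(0,T]$. With this correction your identification of the $t^{-\beta/2}$ time singularity from the semigroup term and the resulting exponent range $\beta<\min(1,\nicefrac2q)$ goes through unchanged.
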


\begin{proof}
    The first part of the theorem is a standard result, see for instance~\cite[Theorem 4.48]{malek_necas_1996}. For the second part, we will use that the solution is given by the implicit formula~\eqref{eq:duhamel}. As a result of the smoothing property of the heat kernel, the solution $m$ is a continuous function from $(0, T)$ into $C^{0, \beta}(\R^d)$. Applying a difference operator $\Delta_h$ with $h \in \R^d$ to the solution formula yields
    \begin{equation*}
        \Delta_h[m(\cdot, t)] = \bigl(\Delta_h[K_t^\eps] * u_\initial\bigr) + \int_0^t \bigl(\Delta_h[\nabla K_{t-s}^\eps] * f(m(s))\bigr)\, ds.
    \end{equation*}
    Using Young's convolution inequality and Lemma~\ref{lemma:heat_estimates} in Appendix~\ref{app:heat-kernel-estimates} gives
    \begin{equation*}
    \begin{aligned}
        \bignorm{\Delta_h[m(\cdot, t)]}_{L^\infty(\R^d)} & \leq C(\beta, d, \eps) \frac{|h|^\beta}{t^{\frac{\beta}{2}}} \norm{u_\initial}_{L^\infty(\R^d)} + |f|_{\lip} |h|^\beta \int_0^t \frac{\norm{m(s)}_{L^\infty(\R^d)}}{(t-s)^\frac{1+\beta}{2}}\, ds \\
        & \leq C(\beta, d, \eps) \norm{u_\initial}_{L^\infty(\R^d)} |h|^\beta \biggl(\frac{1}{t^{\frac{\beta}{2}}} + |f|_{\lip} t^\frac{1-\beta}{2}\biggr)
    \end{aligned}
    \end{equation*}
    for a constant $C$ depending on $\beta$, $d$ and $\eps$, where $|f|_{\lip}$ denotes the Lipschitz constant of $f$. Dividing by $|h|^\beta$ and taking the supremum over $|h| > 0$, we obtain the $C^{0, \beta}$ seminorm on the left-hand side. The remaining right-hand side is integrable in $L^q(0, T)$ for all $\beta < 2/q$.
\end{proof}

\subsection{Existence and uniqueness of solutions} \label{subsec:existence_uniqueness}

The goal of this section is to prove well-posedness of weak solutions for the stochastic mean-field conservation law~\eqref{eq:mean_field_cl_2}. A weak solution is understood in the same way as in Definition~\ref{def:stoch_cont_weak}, that is, a bounded random field $u$ that, when tested against any smooth, compactly supported function $\vartheta$, yields a continuous $\F_t$-semimartingale, and satisfies
\begin{equation*}
    \begin{aligned}
        \int_{\R^d} \vartheta u_t\, dx & = \int_{\R^d} \vartheta u_\initial\, dx + \int_0^t \int_{\R^d} a(\E[u_r]) u_r \cdot \nabla \vartheta\, dx\,dr \\
        & \quad + \eps \int_0^t \biggl(\int_{\R^d} u_r \nabla \vartheta\, dx \biggr) \circ dW_r
    \end{aligned}
\end{equation*}
for all $t \in [0, T]$ and $\vartheta \in C_c^\infty(\R^d)$, $\Pas$. We recall from Lemma~\ref{lemma:system_ito_form} that the above relation can be easily converted to its equivalent Itô form~\eqref{eq:mean_field_cl_weak_ito}.

\begin{lemma}[The mean-field equation] \label{lemma:mean_field_eq}
    Let $u$ be a weak solution of~\eqref{eq:mean_field_cl_2}. Then its mean, $m := \E[u]$, is the unique weak solution of the viscous conservation law~\eqref{eq:mean_field_2}, whose well-posedness is guaranteed by Theorem~\ref{thm:mean_field}.
\end{lemma}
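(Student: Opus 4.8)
The plan is to take the expectation of the weak (Itô) formulation of~\eqref{eq:mean_field_cl_2} and show that the resulting deterministic function $m = \E[u]$ satisfies the weak formulation~\eqref{eq:mean_field_weak}; uniqueness then follows from Theorem~\ref{thm:mean_field}. Start from the Itô form~\eqref{eq:mean_field_cl_weak_ito} of the weak solution (with $b = a(\E[u_r]) = a(m_r)$ and noise coefficient $\eps$): for each $\vartheta \in C_c^\infty(\R^d)$ and $t \in [0,T]$, $\P$-a.s.,
\begin{equation*}
    \int_{\R^d} \vartheta u_t\, dx = \int_{\R^d} \vartheta u_\initial\, dx + \int_0^t \int_{\R^d} a(m_r) u_r \cdot \nabla \vartheta\, dx\,dr + \eps \int_0^t \biggl(\int_{\R^d} u_r \nabla \vartheta\, dx\biggr)\cdot dW_r + \frac{\eps^2}{2}\int_0^t \int_{\R^d} u_r \Delta\vartheta\, dx\,dr.
\end{equation*}
Since $u \in L^\infty(\R^d\times(0,T)\times\Omega)$ and $\vartheta$ has compact support, the integrand $\int_{\R^d} u_r \nabla\vartheta\,dx$ is a bounded, $\F_t$-adapted process, so the Itô integral is a genuine martingale with zero expectation. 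Taking $\E[\cdot]$ of the identity above, using Fubini's theorem (justified by boundedness of $u$ and compact support of $\vartheta$) to interchange expectation with the spatial and temporal integrals, and writing $m_r = \E[u_r]$, we obtain
\begin{equation*}
    \int_{\R^d} \vartheta m_t\, dx = \int_{\R^d} \vartheta u_\initial\, dx + \int_0^t \int_{\R^d} a(m_r) m_r \cdot \nabla\vartheta\, dx\,dr + \frac{\eps^2}{2}\int_0^t \int_{\R^d} m_r \Delta\vartheta\, dx\,dr.
\end{equation*}
Recalling $a(v) v = f(v) - f(0)$ and that $\int_{\R^d}\nabla\vartheta\,dx = 0$, the term $\int a(m_r) m_r\cdot\nabla\vartheta\,dx$ equals $\int f(m_r)\cdot\nabla\vartheta\,dx$. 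This is exactly the distributional (in time) form of~\eqref{eq:mean_field_2}; a standard integration by parts against a test function $\varphi(x,t) = \vartheta(x)\phi(t)$ and then density/approximation in $\varphi \in C_c^\infty(\R^d\times[0,T))$ recovers the weak formulation~\eqref{eq:mean_field_weak}. Note that $m \in L^\infty$ follows from the maximum-principle-type bound $\|u\|_{L^\infty(\R^d\times(0,T)\times\Omega)} < \infty$, since $|m_t(x)| = |\E[u_t(x)]| \leq \|u\|_{L^\infty}$.

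There is one regularity subtlety to address: for $m$ to be a weak solution in the sense of Definition~\ref{def:mean_field_weak} we need $m \in L^\infty(\R^d\times(0,T))$, which we have, but to make the manipulations above fully rigorous we should check that $t \mapsto \int_{\R^d}\vartheta m_t\,dx$ is (a.e. equal to) an absolutely continuous function of $t$ — this is immediate from the identity just derived, since its right-hand side is absolutely continuous in $t$. One must also confirm measurability of $(x,r,\omega) \mapsto u_r(x,\omega)$ and of $m$, which is part of the definition of a weak solution / follows from the pushforward representation, so Fubini applies. Finally, uniqueness of $m$ as a weak solution of~\eqref{eq:mean_field_2} is precisely the content of Theorem~\ref{thm:mean_field}, so $m$ is \emph{the} weak solution.

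The only mildly delicate point — and the one I expect to be the main obstacle — is the interchange of expectation and the stochastic integral, i.e. verifying that $\E\bigl[\int_0^t (\int_{\R^d} u_r\nabla\vartheta\,dx)\cdot dW_r\bigr] = 0$. This is true because the integrand is bounded (hence square-integrable on $[0,t]\times\Omega$), so the Itô integral is a true (square-integrable) martingale started at $0$; care is only needed to invoke the right version of Fubini for the Lebesgue–Bochner integrals in $r$ and $\omega$ and to note that the "continuous $\F_t$-semimartingale" hypothesis in the definition of weak solution guarantees the integrand is progressively measurable. Everything else is bookkeeping with Fubini and the algebraic identity $a(v)v = f(v)-f(0)$.
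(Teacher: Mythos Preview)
Your proposal is correct and follows the same approach as the paper: take the expectation of the It\^o form~\eqref{eq:mean_field_cl_weak_ito}, use that the stochastic integral is a true martingale (bounded integrand) to kill that term, and identify the result as the evolutionary weak form of~\eqref{eq:mean_field_2}. The paper's proof is terser---it omits the algebraic step $a(m)m = f(m)-f(0)$ and cites a reference for the equivalence of the evolutionary and spacetime weak formulations---but the argument is identical.
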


\begin{proof}
    Taking expectation of~\eqref{eq:mean_field_cl_weak_ito}, we see that $m$ satisfies
    \begin{equation} \label{eq:mean_field_linear}
        \int_{\R^d} \vartheta m(t)\, dx = \int_{\R^d} \vartheta u_\initial\, dx + \int_0^t \int_{\R^d} f(m(r)) \cdot \nabla \vartheta + \frac{\eps^2}{2} \int_0^t \int_{\R^d} m(r) \Delta \vartheta\, dx\,dr
    \end{equation}
    for all $t \in [0, T]$ and $\vartheta \in C^\infty_c(\R^d)$. The spacetime weak formulation~\eqref{eq:mean_field_weak} and the evolutionary formulation~\eqref{eq:mean_field_linear} are equivalent, as can be shown by a standard argument (see e.g.~\cite[Proposition 6.1.2]{bogachev_krylov_rockner2015}).
\end{proof}

The previous lemma shows that the mean of any weak solution of~\eqref{eq:mean_field_cl_2} is a weak solution of the viscous conservation law~\eqref{eq:mean_field_2}. We now establish the converse: that any solution $m$ of \eqref{eq:mean_field_2} admits a stochastic representation, in the sense that there exists a weak solution $u$ of \eqref{eq:mean_field_cl_2} whose mean is $m$. The proof is constructive: We begin by fixing a deterministic solution $\bar{m}$, and use it as a coefficient in an auxiliary linear stochastic continuity equation. We then show that the mean of the resulting process must, by uniqueness, be identical to $\bar{m}$, thereby verifying consistency of the representation.

\begin{proof}[Proof of Theorem~\ref{thm:main_spde}]
    Given $u_\initial \in L^\infty(\R^d)$, we begin by fixing a function $\bar{m}$, defined as the unique weak solution of the viscous conservation law~\eqref{eq:mean_field_2} with initial data $u_\initial$, and consider the associated linear stochastic continuity equation
    \begin{equation} \label{eq:auxiliary_stoch_cont}
        du + \nabla \cdot \bigl(a(\bar{m}) u\bigr)\, dt + \eps \nabla u \circ dW_t = 0,
    \end{equation}
    also with initial data $u_\initial$. Since we have assumed that $f \in C^{1, \alpha}(\R; \R^d)$, the function $a(u) \coloneqq \frac{f(u)-f(0)}{u}$ is at least $C^{0, \alpha}(\R; \R^d)$. Combined with $\bar{m}$ being bounded and in $L^2((0, T); C^{0, \beta}(\R^d))$ for any $\beta \in (0, 1)$, the composition $a\circ \bar{m}$ is bounded with $\alpha \beta$-Hölder seminorm
    \begin{equation*}
        \bigl[a(\bar{m}(\cdot, t))\bigr]_{C^{0, \alpha \beta}(\R^d; \R^d)} \leq [a]_{C^{0, \alpha}(\R; \R^d)} [\bar{m}(\cdot, t)]_{C^{0, \beta}(\R^d)}^\alpha
    \end{equation*}
    for a.e.~$t \in (0, T)$. This implies in particular that $a \circ \bar{m} \in L^2((0, T); C^{0, \alpha \beta}(\R^d; \R^d))$. Consequently, the velocity field $a(\bar{m})$ in the auxiliary equation~\eqref{eq:auxiliary_stoch_cont} satisfies the assumptions of Theorem~\ref{thm:linear_continuity}, meaning that there is a unique weak solution $u \in L^\infty((0, T)\times \R^d \times \Omega)$ of this equation, given by the formula
    \begin{equation*}
        \begin{aligned}
            \int_{\R^d} \vartheta(x) u_t(x)\, dx & = \int_{\R^d} \vartheta(X_t(x)) u_\initial(x)\, dx \\
            & = \int_{\R^d} \vartheta(x) u_\initial\bigl(X_t^{-1}(x)\bigr) \det\bigl(\nabla X_t^{-1}(x) \bigr)\, dx
        \end{aligned}
    \end{equation*}
    for all $t \in [0, T]$ and $\vartheta \in C_c(\R^d)$, $\Pas$, where $X$ is a stochastic flow of \diffeomorphisms generated by~\eqref{eq:stochastic_flow} with drift $a(\bar{m})$. Proceeding as in Lemma~\ref{lemma:mean_field_eq}, converting equation~\eqref{eq:auxiliary_stoch_cont} to Itô form introduces a second-order correction term. Taking the expectation, the Itô martingale term vanishes, and we find that $m \coloneqq \E[u]$ satisfies, in the weak sense, the linear parabolic PDE
    \begin{equation*}
        \partial_t m + \nabla \cdot \bigl(a(\bar{m}) m\bigr) = \tfrac{\eps^2}{2} \Delta m
    \end{equation*}
    with initial condition $u_\initial$. By construction, $\bar{m}$ is also a solution of this linear PDE, since $\nabla \cdot (a(\bar{m}) \bar{m}) = \nabla \cdot f(\bar{m})$, and therefore, by uniqueness of weak solutions, we must have $m = \bar{m}$. This verifies the stochastic representation of $\bar{m}$, showing that $\bar{m} = m = \E[u]$.

    To prove uniqueness, suppose that there are two weak solutions, $u$ and $v$, of~\eqref{eq:mean_field_cl}. Then by Lemma~\ref{lemma:mean_field_eq}, we must have $\E[u] = \E[v] = m$, where $m$ is the unique weak solution of \eqref{eq:mean_field_2}. Consequently, both $u$ and $v$ solve the same linear stochastic continuity equation with velocity field $a(m)$, whose uniqueness is guaranteed by Theorem~\ref{thm:linear_continuity}. This implies that $u = v$ (almost everywhere on $\R^d \times (0, T)$, $\Pas$), completing the proof.
\end{proof}

\section{Filippov's differential inclusion and zero-noise limit of SDEs} \label{sec:filippov}

This section establishes a convergence result for the zero-noise limit of the SDE
\begin{equation} \label{eq:general_sde}
    dX_t^\eps = b^\eps(X_t^\eps, t)\, dt + \eps\, dW_t.
\end{equation}
We assume that the drift $b^\eps$ is bounded, measurable, and satisfies certain weak convergence criteria (including cases like one-sided Lipschitz functions, which are relevant to scalar conservation laws). Under these conditions, we prove that the solutions $X^\eps$ converge in law to some limit process $X$ that is almost surely a Filippov solution of a limiting ODE
\begin{equation} \label{eq:limit_ode}
    \tfrac{d}{dt}{X}_t = b(X_t, t).
\end{equation}
Our framework accommodates time-dependent drifts and relaxes the uniform convergence assumption used in related works such as~\cite{buckdahn_ouknine_2009}. The analysis is self-contained and provides the technical foundation needed for the remaining part of the paper.

\subsection{Filippov solutions} \label{subsec:filippov_solutions}

The Filippov solution concept is a way to define solutions for ODEs where the right-hand side is irregular (e.g.~discontinuous). The core idea is to replace the right-hand side by a set-valued map that captures its essential limiting behavior in the neighborhood of any point.

Let $\overline{\conv}(A)$ denote the smallest closed convex set containing $A \subset \R^d$. For a bounded and measurable function $b\colon \R^d \times (0, T) \to \R^d$, define the set-valued map
\begin{equation*}
    K_R[b](x, t) \coloneqq \bigcap_{\substack{N \subset \R^d \\ |N| = 0}}\overline{\conv} \bigl(b(B_{R}(x) \setminus N, t)\bigr),
\end{equation*}
i.e.~the closed convex hull of essential range of $b$ over the ball $B_R(x)$ of radius $R > 0$ centered at $x \in \R^d$. Let furthermore $K[b](x, t)\coloneqq \bigcap_{R>0} K_R[b](x, t)$. A \emph{Filippov solution} of the ODE~\eqref{eq:limit_ode} is an absolutely continuous function $t \mapsto X_t$ that satisfies the differential inclusion
\begin{equation*}
    \tfrac{d}{dt}{X}_t \in K[b](X_t, t) \quad \text{for a.e. }t \in (0, T).
\end{equation*}
As shown by Filippov~\cite{Filippov60}, this is equivalent to the condition that for every direction $v \in \R^d$, one has
\begin{equation*}
    \frac{d{X}_t}{dt} \cdot v \leq M[b \cdot v](X_t, t) \quad \text{for a.e. }t \in (0, T),
\end{equation*}
where $M$ denotes the limiting local essential supremum, defined by
\begin{equation*}
    M[c](x) \coloneqq \lim_{R \to 0} M_R[c](x), \qquad M_R[c](x) \coloneqq \esssup_{y \in B_R(x)} c(y) 
\end{equation*}
for any $c\in L^\infty(\R^d)$.

\subsection{Zero-noise limit of SDEs}

The noise in the SDE~\eqref{eq:general_sde} has a regularizing effect, ensuring the equation is well-posed even for irregular drifts $b^\eps$. Indeed, as mentioned in the introduction, a classical result by Zvonkin~\cite{zvonkin_1974} and Veretennikov~\cite{veretennikov_1981} guarantees that a unique strong solution exists if the drift is merely bounded and measurable. The boundedness of the drift allows for standard moment estimates that establish tightness of the sequence of solutions $(X^\eps)_{\eps > 0}$, which, by Prokhorov's theorem, implies existence of a subsequence converging in law to some process $X$. The characterization of this limiting process depends on the mode of convergence of the drift $b^\eps$.

For some radius $R > 0$, we define a time-dependent distance between the vector fields $b^\eps$ and $b$ by
\begin{equation*}
    d_R(b^\eps; b)(t) \coloneqq \esssup_{x \in \R^d} \dist \bigl(b^\eps(x, t), K_R[b](x, t)\bigr).
\end{equation*}
This is a well-defined, measurable function of time $t$. We now show that convergence with respect to the distance $d$ is a sufficient condition for weak convergence of solutions of the SDE to a limiting Filippov solution on the space of continuous paths, $\C \coloneqq C([0, T]; \R^d)$, equipped with the supremum norm.

\begin{theorem} \label{thm:filippov_convergence}
    Let $b^\eps, b: \mathbb{R}^d \times (0, T) \to \mathbb{R}^d$ be bounded and measurable vector fields. For $\eps > 0$, let $X^\eps$ be the solution of the SDE
    \begin{equation} \label{eq:general_sde_integral}
        X_t^\eps = x + \int_0^t b^\eps(X_r^\eps, r)\, dr + \eps W_t.
    \end{equation}
    Assume that for all $R > 0$, the vector fields $b^\eps$ converge in the sense that
    \begin{equation}\label{eq:convergence-in-local-convex-hull}
        \lim_{\eps \to 0} \bignorm{d_R(b^\eps; b)}_{L^1(0, T)} = 0.
    \end{equation}
    Then the sequence of processes $(X^\eps)_{\eps > 0}$ is tight in $\C$, and any of its limit points is $\Pas$ a Filippov solution of the limiting ODE~\eqref{eq:limit_ode}.
\end{theorem}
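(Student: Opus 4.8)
The plan is to prove tightness by standard moment estimates, then characterize limit points by passing to the limit in a suitable martingale-type characterization of the Filippov inclusion. First I would establish tightness of $(X^\eps)_{\eps>0}$ in $\C$ via the Kolmogorov--Chentsov criterion: since $b^\eps$ is uniformly bounded (say $\norm{b^\eps}_{L^\infty}\leq M$ uniformly in $\eps$, which we may assume after noting the relevant $b^\eps$ in our application are uniformly bounded), we get for $s\leq t$ that $\E|X_t^\eps-X_s^\eps|^4 \lesssim M^4(t-s)^4 + \eps^4(t-s)^2 \lesssim (t-s)^2$ uniformly in $\eps\leq 1$, which by Kolmogorov's criterion gives tightness in $\C$ (the initial point is fixed). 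By Prokhorov, every subsequence has a further subsequence converging in law, so it remains only to identify the limit.

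The key step is characterizing the limit. I would fix a direction $v\in\R^d$ and use Filippov's scalar characterization: it suffices to show that any weak limit $X$ satisfies $\dot X_t\cdot v \leq M[b\cdot v](X_t,t)$ for a.e.\ $t$, for every $v$. The natural way to see this at the level of the SDE is to bound increments: from \eqref{eq:general_sde_integral}, for $0\leq s\leq t\leq T$,
\begin{equation*}
    (X_t^\eps - X_s^\eps)\cdot v = \int_s^t b^\eps(X_r^\eps,r)\cdot v\, dr + \eps(W_t-W_s)\cdot v.
\end{equation*}
Now I would control $b^\eps(X_r^\eps,r)\cdot v$ by $M_R[b\cdot v](X_r^\eps, r)$ up to the error $d_R(b^\eps;b)(r)$ (using that $b^\eps(x,r)$ lies within distance $d_R(b^\eps;b)(r)$ of the convex set $K_R[b](x,r)$, whose support function in direction $v$ is at most $M_R[b\cdot v](x,r)$), so that
\begin{equation*}
    (X_t^\eps - X_s^\eps)\cdot v \leq \int_s^t M_R[b\cdot v](X_r^\eps,r)\, dr + \norm{d_R(b^\eps;b)}_{L^1(0,T)} + \eps\,|(W_t-W_s)\cdot v|.
\end{equation*}
Passing to the limit along the convergent subsequence (using the Skorokhod representation theorem to realize the convergence $X^\eps\to X$ almost surely in $\C$, the hypothesis \eqref{eq:convergence-in-local-convex-hull} to kill the middle term, and $\eps\to0$ to kill the last), together with some semicontinuity/approximation argument for the integral term, gives
\begin{equation*}
    (X_t - X_s)\cdot v \leq \int_s^t M_R[b\cdot v](X_r,r)\, dr
\end{equation*}
$\P$-a.s. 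Dividing by $t-s$, letting $t\downarrow s$ at Lebesgue points, and then letting $R\to0$ (monotone convergence, since $M_R$ decreases to $M$) yields $\dot X_t\cdot v\leq M[b\cdot v](X_t,t)$. Applying this for a countable dense set of directions $v$ and invoking Filippov's equivalence completes the identification.

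The main obstacle will be the passage to the limit in the term $\int_s^t M_R[b\cdot v](X_r^\eps,r)\,dr$. The map $x\mapsto M_R[b\cdot v](x,r)$ is only upper semicontinuous (not continuous), and it need not be, say, the restriction of a continuous function, so one cannot naively pass the limit through. I expect the fix to be one of the following: (a) replace $M_R$ by a slightly larger continuous majorant — e.g.\ mollify, or use that for $R'>R$ one has $M_R[c](y)\leq M_{R'}[c](x)$ whenever $|x-y|<R'-R$, so that $\int_s^t M_R[b\cdot v](X_r^\eps,r)\,dr \leq \int_s^t M_{R'}[b\cdot v](X_r,r)\,dr$ once $\sup_r|X_r^\eps-X_r|<R'-R$, which holds eventually a.s.\ after Skorokhod embedding — and then let $R'\downarrow R$; or (b) invoke a Portmanteau-type argument exploiting upper semicontinuity of $x \mapsto \int_s^t M_R[b\cdot v](x\text{-shifted path})$. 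Option (a) seems cleanest and avoids any delicate measurability of the limit field along the (random) path. A secondary technical point is checking that $d_R(b^\eps;b)(\cdot)$ and $M_R[c](\cdot)$ are genuinely measurable in their arguments so that all the integrals make sense; these are routine but should be stated. Everything else — the moment bounds, Prokhorov, Skorokhod, the Lebesgue-point differentiation, and assembling countably many directions — is standard.
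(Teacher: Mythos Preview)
Your proposal is correct and follows essentially the same route as the paper: tightness from boundedness, Skorokhod representation, and then your option~(a) is precisely the paper's trick --- they bound $b^{\eps_k}(\tilde X^{\eps_k}_t,t)\cdot v$ by $M_{R/2}[b\cdot v](\tilde X^{\eps_k}_t,t)+d_{R/2}(b^{\eps_k};b)(t)|v|$ and then, once $\|\tilde X^{\eps_k}-\tilde X\|_\C<R/2$, enlarge to $M_R[b\cdot v](\tilde X_t,t)$, exactly as you suggest with $R$ and $R'$. The only point the paper makes explicit that you gloss over is that the pointwise inequality $b^\eps(x,t)\cdot v\leq M_R[b\cdot v](x,t)+d_R(b^\eps;b)(t)|v|$ holds only for a.e.\ $x$ (since $d_R$ is an essential supremum), and one needs the absolute continuity of the law of $X^{\eps}_t$ to justify plugging in the process; this is worth stating.
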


\begin{remark}
    The convergence $b^\eps \to b$ is measured in the distance $d_R$, since $L^\infty$-convergence of the drift coefficients is an overly strong condition in the presence of discontinuities. This framework closely follows Filippov's classical stability theorem, which ensures convergence of solutions when the time integral of the distance between the drift coefficients and the limiting Filippov set-valued map vanishes \cite{Filippov60}.
\end{remark}

\begin{proof}
    For all $\eps > 0$ and any $x \in \R^d$, there exists a unique strong solution of~\eqref{eq:general_sde}, i.e.~a stochastic process $X^\eps$ on the probability space $(\Omega, \F, \P)$, adapted to the filtration $(\F_t)$, which satisfies~\eqref{eq:general_sde_integral} for all $t \in [0, T]$, $\Pas$. Boundedness of the drift ensures tightness of the sequence of solutions $(X^\eps)$, which in turn implies tightness of the sequence of joint laws for $(X^\eps, W)$. By Prokhorov's theorem, there exists a subsequence ${\eps_k \to 0}$ such that $(X^{\eps_k}, W)_\# \P$ converges weakly to some probability measure $\mu$ on $\C \times \C$. Using Skorokhod's theorem, it is possible to construct a new probability space $(\tilde{\Omega}, \tilde{\F}, \tilde{\P})$ and new stochastic processes $\tilde{X}^{\eps_k}, \tilde{W}^{\eps_k}$ and $\tilde{X}, \tilde{W}$ such that
    \begin{equation} \label{eq:as_convergence}
        \tilde{X}^{\eps_k} \to \tilde{X}, \quad \tilde{W}^{\eps_k} \to \tilde{W} \qquad \text{in } \C, \quad  \tilde{\P}\text{-a.s.},
    \end{equation}
    and
    \begin{equation*}
        (\tilde{X}^{\eps_k}, \tilde{W}^{\eps_k})_{\#} \tilde{\P} = (X^{\eps_k}, W)_{\#} \P, \qquad (\tilde{X}, \tilde{W})_{\#} \tilde{\P} = \mu.
    \end{equation*}
    Setting $\tilde{Y}^{\eps_k} \coloneqq \tilde{X}^{\eps_k} - \eps_k \tilde{W}^{\eps_k}$, we have
    \begin{equation} \label{eq:y}
        \tilde{Y}^{\eps_k}_t = x + \int_0^t b^{\eps_k}(\tilde{X}^{\eps_k}_r, r)\, dr \qquad \tilde{\P}\text{-a.s.},
    \end{equation}
    and $\tilde{Y}^{\eps_k} \to \tilde{X}$ in $\C$ in view of~\eqref{eq:as_convergence} (see e.g.~\cite[Chapter 2.6]{krylov_1980} for an argument that the new processes $\tilde{X}^{\eps_k}, \tilde{W}^{\eps_k}$ satisfy an analogous SDE on $(\tilde{\Omega}, \tilde{\F}, \tilde{\P})$). Fix a radius $R > 0$. The drift $b^{\eps_k}(\tilde{X}^{\eps_k})$ in~\eqref{eq:y} can be decomposed into one part which belongs to $K_{R/2}[b](\tilde{X}^{\eps_k})$ and one part which is the corresponding difference, i.e.
    \begin{equation} \label{eq:drift_decomposition}
        b^{\eps_k}(\tilde{X}^{\eps_k}) = y + \bigl(b^{\eps_k}(\tilde{X}^{\eps_k}) - y\bigr),
    \end{equation}
    where $y \in K_{R/2}[b](\tilde{X}^{\eps_k})$ is the point which minimizes $|b^{\eps_k}(\tilde{X}^{\eps_k})-y|$. Note that for $\tilde{\P}$-a.e.~$\omega \in \tilde{\Omega}$, there exists $N \in \N$ such that ${\norm{\tilde{X}^{\eps_k}(\omega) - \tilde{X}(\omega)}_{\C} < R/2}$ for all $k \geq N$. Fix such $\omega$ and $N$ in the following few calculations. Taking the inner product between~\eqref{eq:drift_decomposition} and an arbitrary $v \in \R^d$, this yields
    \begin{equation} \label{eq:distance_inequality}
    \begin{aligned}
        b^{\eps_k}(\tilde{X}_t^{\eps_k}, t) \cdot v & \leq M_{R/2}[b\cdot v](\tilde{X}_t^{\eps_k}, t) + d_{R/2}(b^{\eps_k}; b)(t) |v| \\
        & \leq M_R[b\cdot v](\tilde{X}_t, t) + d_R(b^{\eps_k}; b)(t) |v|
    \end{aligned}
    \end{equation}
    for a.e.~$t \in (0, T)$, for all $k \geq N$. Using this inequality in~\eqref{eq:y}, we obtain
    \begin{equation*}
    \begin{aligned}
        \big(\tilde{Y}^{\eps_k}_t - \tilde{Y}^{\eps_k}_s\big) \cdot v & = \int_s^t b^{\eps_k}(\tilde{X}_r^{\eps_k}, r) \cdot v\, dr \\
        & \leq \int_s^t M_R[b\cdot v](\tilde{X}_r, r) + d_R(b^{\eps_k}; b)(r) |v|\, dr.
    \end{aligned}
    \end{equation*}
    Although the inequality~\eqref{eq:distance_inequality} holds only for almost every point in $\R^d$, this is sufficient for our pathwise analysis since the law of the process $\tilde{X}_t^{\eps_k}$ is absolutely continuous with respect to the Lebesgue measure for all $t > 0$. Consequently, the probability of the process occupying the null set where the inequality might fail is zero, ensuring that the bound holds $\tilde{\P}$-a.s. Passing $k \to \infty$ and using the convergence~\eqref{eq:convergence-in-local-convex-hull} of $d_R$, we see that the limit $\tilde{X}$ is a Lipschitz function which satisfies 
    \begin{equation*}
        (\tilde{X}_t - \tilde{X}_s) \cdot v \leq \int_s^t M_R[b\cdot v](\tilde{X}_r, r)\, dr
    \end{equation*} 
     for all $0 < s < t < T$, $\tilde{\P}$-a.s. Since this is valid for arbitrary $R> 0$, the Lipschitz continuous process $t \mapsto \tilde{X}$ must be a Filippov solution of~\eqref{eq:limit_ode}, $\tilde{\P}$-a.s. Finally, let $F$ be the set of all Filippov solutions of~\eqref{eq:limit_ode}. Since $\tilde{X}$ and $X$ have the same law, we have $\tilde{\P}\bigl(\tilde{X} \in F\bigr) = \P\bigl(X \in F\bigr) = 1$, meaning that $X$ is $\P$-a.s.~a Filippov solution of~\eqref{eq:limit_ode}. The same argument applies to any weakly convergent subsequence.
\end{proof}

\section{Zero-noise limit of the SPDE}

This section is dedicated to proving Theorem~\ref{thm:main_zero_noise}, which establishes convergence of the solution $u^\eps$ of the stochastic mean-field equation~\eqref{eq:mean_field_cl} to the entropy solution of the conservation law~\eqref{eq:cl}. Our analysis is performed in one dimension, under the assumption that the flux $f$ is strictly convex, and that the initial data $u_\initial$ belongs to $\BV_\loc \cap L^\infty \cap L^1(\R)$.

Building on the stochastic representation of the viscous conservation law~\eqref{eq:mean_field} established in Theorem~\ref{thm:main_spde}, our strategy is to show that the associated stochastic flow $X^\eps$ converges. Specifically, we will prove that this flow, governed by the SDE
\begin{equation}\label{eq:stochastic_flow_2}
    \begin{cases}
        dX_t^\eps = a(m^\eps(X_t^\eps,t))\,dt + \eps\,dW_t &\text{for}\ s < t < T, \\ 
        X_s^\eps = x,
    \end{cases}
\end{equation}
converges, for each $x \in \R$, as $\eps \to 0$, to the unique Filippov flow
\begin{equation} \label{eq:cl_flow_2}
    \begin{cases}
        \tfrac{d}{dt}{X}_t = a(u(X_t, t)) &\text{for}\ s < t < T, \\
        X_s = x,
    \end{cases}
\end{equation}
of the limiting conservation law \eqref{eq:cl}. Recall that the well-posedness of the Filippov flow $X$ of \eqref{eq:cl_flow_2} was established in~\cite{fjordholm_2018} (see Section~\ref{subsec:particle_paths}).

We first establish a technical lemma which makes our results from Section~\ref{sec:filippov} applicable to the current problem. To this end, let the one-sided Lipschitz constant $|b|_{\lip^+}$ of a function $b$ be defined as
\begin{equation*}
    |b|_{\lip^+} \coloneqq \sup_{x\neq y}\frac{b(x)-b(y)}{x-y}.
\end{equation*}

\begin{lemma} \label{lemma:drift_convergence}
    Let $b^k, b$ be functions in $L^\infty \cap L^1(\R \times (0, T))$ for all $k\in\N$. Assume that
    \begin{enumerate}[label=(\roman*)]
        \item there exists a function $0\leq L\in L^{\nicefrac{1}{2}}([0,T])$ such that $|b^k(\cdot, t)|_{\lip^+} \leq L(t)$ for a.e.~$t \in (0, T)$ and all $k\in\N$,
        \item the following limit holds:
        \begin{equation} \label{eq:l1_lip_limit}
            \lim_{k \to \infty} \norm{b^k - b}_{L^\infty((0,T);L^1(\R))} = 0.
        \end{equation}
    \end{enumerate}
    Then, for any radius $R > 0$, we have
    \begin{equation} \label{eq:dist_limit}
        \lim_{k \to \infty} \bignorm{d_R(b^k; b)}_{L^1(0, T)} = 0.
    \end{equation}
\end{lemma}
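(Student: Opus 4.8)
The plan is to control, for each fixed $R>0$, the quantity $d_R(b^k;b)(t) = \esssup_x \dist(b^k(x,t), K_R[b](x,t))$ by exploiting the one-sided Lipschitz bound (i), which forces the essential range of $b$ over a ball to be an interval whose length is governed by $L(t)$, together with the $L^1$-convergence (ii), which lets us locate $b^k(x,t)$ near a local average of $b(\cdot,t)$ that in turn lies in the convex hull $K_R[b](x,t)$. The main point is that (i) prevents the essential oscillation of $b$ from being concentrated on a null set, so that being $L^1$-close to $b$ genuinely puts $b^k(x,t)$ close to the Filippov set.

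First I would fix $t$ (outside a null set where (i) fails) and a point $x$, and work on the ball $B_{R/2}(x)$. By assumption (i), the map $y\mapsto b^k(y,t) - L(t)y$ is essentially nonincreasing, hence has essential one-sided limits, and in particular the essential oscillation of $b^k(\cdot,t)$ on $B_{R/2}(x)$ is at most $R\,L(t)$ plus the oscillation of the monotone part; the key consequence I want is that for a.e.\ $z\in B_{R/2}(x)$,
\begin{equation*}
    b^k(z,t) \;\leq\; \essinf_{y\in B_{R/2}(x)} b^k(y,t) \;+\; R\,L(t),
\end{equation*}
and symmetrically from above, so $b^k(\cdot,t)$ restricted to the ball is, up to an error $R\,L(t)$, squeezed between its essential inf and sup; the same holds for $b$. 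Next, using (ii), I would compare the local averages: writing $\fint_{B_\rho(x)} b^k(y,t)\,dy$ and $\fint_{B_\rho(x)} b(y,t)\,dy$ for a radius $\rho \leq R/2$, their difference is bounded by $|B_\rho|^{-1}\norm{b^k(\cdot,t)-b(\cdot,t)}_{L^1(\R)}$. Each average of $b$ over $B_\rho(x)\subset B_R(x)$ lies in the closed convex hull of the essential range of $b(\cdot,t)$ over $B_R(x)$, i.e.\ in $K_R[b](x,t)$ (here I use that $\conv$ of the essential range contains all averages over subballs, which is exactly the content of Filippov's set-valued map). Combining the one-sided Lipschitz squeeze with the average comparison, I get
\begin{equation*}
    \dist\bigl(b^k(x,t), K_R[b](x,t)\bigr) \;\leq\; C\,R\,L(t) \;+\; \frac{C}{\rho}\,\norm{b^k(\cdot,t)-b(\cdot,t)}_{L^1(\R)}
\end{equation*}
for a.e.\ $x$, where the $R\,L(t)$ term comes from the oscillation bound and the average-to-pointwise comparison for $b^k$. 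Taking the esssup over $x$ gives $d_R(b^k;b)(t) \leq C R L(t) + C\rho^{-1}\norm{b^k(\cdot,t)-b(\cdot,t)}_{L^1(\R)}$.

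Now I would integrate in time: by assumption $L\in L^{1/2}([0,T])\subset L^1([0,T])$, so $\int_0^T R L(t)\,dt \to 0$ as $R\to 0$ is \emph{not} quite what I want — rather, for \emph{fixed} $R$ this term is a fixed constant, which is a problem. The resolution is the standard diagonal trick: the bound must be refined so that the first term is not $RL(t)$ but something that vanishes as $k\to\infty$ for fixed $R$. I would instead choose $\rho=\rho_k\to 0$ slowly enough that $\rho_k^{-1}\norm{b^k-b}_{L^\infty((0,T);L^1(\R))}\to 0$ (possible by (ii)), and separately argue that the oscillation error is genuinely $o(1)$: since $b^k\to b$ in $L^\infty_tL^1_x$ and the $b^k$ share the one-sided Lipschitz bound $L$, the limit $b$ inherits $|b(\cdot,t)|_{\lip^+}\leq L(t)$, and then $b^k(x,t)$ is trapped between the essential inf and sup of $b(\cdot,t)$ over $B_R(x)$ up to an error that is $o(1)$ in $k$ uniformly — this uses that the monotone rearrangement $b^k(\cdot,t)-L(t)\,\mathrm{id}$ converges in $L^1$ to $b(\cdot,t)-L(t)\,\mathrm{id}$, and $L^1$-convergence of monotone functions upgrades to pointwise convergence at continuity points of the limit, hence a.e., and the oscillation is then controlled by that of the limit, which at Lebesgue points contributes only the genuine jump of $b$, already inside $K_R[b]$. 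Finally I apply dominated convergence in $t$: for a.e.\ $t$, $d_R(b^k;b)(t)\to 0$ as $k\to\infty$, and the integrand is dominated by a fixed integrable function (built from $L(t)$ and $\norm{b^k-b}$), so $\norm{d_R(b^k;b)}_{L^1(0,T)}\to 0$.

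The main obstacle is making the pathwise oscillation estimate rigorous: translating ``$b^k\to b$ in $L^\infty_tL^1_x$ plus uniform one-sided Lipschitz'' into ``for a.e.\ $(x,t)$, $b^k(x,t)$ lies within $o(1)$ of the interval $\bigl[\essinf_{B_R(x)}b(\cdot,t),\ \esssup_{B_R(x)}b(\cdot,t)\bigr]\subset K_R[b](x,t)$.'' The one-sided Lipschitz condition is what saves us — without it, $L^1$-smallness would permit $b^k$ to spike on a tiny set and escape the convex hull — and the cleanest route is via the monotone decomposition $b^k(y,t)=g^k(y,t)+L(t)y$ with $g^k(\cdot,t)$ nonincreasing, where Helly/monotone-convergence arguments give pointwise a.e.\ convergence $g^k\to g$ and hence the desired trapping, with the $L^{1/2}$ integrability of $L$ ensuring the domination needed to conclude via Lebesgue's theorem.
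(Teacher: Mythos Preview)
There is a genuine gap. Your key displayed inequality
\[
b^k(z,t) \;\leq\; \essinf_{y\in B_{R/2}(x)} b^k(y,t) \;+\; R\,L(t)
\]
is false: a one-sided Lipschitz bound $|b^k(\cdot,t)|_{\lip^+}\leq L(t)$ controls only the \emph{upward} slope of $b^k$, not its oscillation. The function $b^k(y)=-\sgn(y)$ has $|b^k|_{\lip^+}=0$, yet oscillation $2$ on any ball containing the origin. Equivalently, in your decomposition $b^k(y,t)=L(t)y+g^k(y,t)$ with $g^k$ nonincreasing, the drop of $g^k$ over $B_\rho(x)$ can be arbitrarily large, so the difference between $b^k(x,t)$ and its average over $B_\rho(x)$ is \emph{not} bounded by $C\rho L(t)$ (nor by $CRL(t)$). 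This is why your first bound has a non-vanishing term --- not because you used $R$ instead of $\rho$, but because the oscillation inequality does not hold.

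The paper's argument avoids any two-sided oscillation estimate. One fixes a near-optimal $x_0$ for $d_R(b^k;b)(t)$, takes a \emph{one-sided} interval $(x_0-r,x_0)$ with a free parameter $r\in(0,R]$, and notes that the average of $b$ over this interval lies in $K_R[b](x_0,t)$. After reducing to the case $b^k(x_0,t)>b(x,t)$ for a.e.\ $x$ in the ball (otherwise the distance is zero, or one uses $(x_0,x_0+r)$ instead), one writes
\[
\frac{1}{r}\int_{x_0-r}^{x_0}\bigl(b^k(x_0,t)-b(x,t)\bigr)\,dx
\leq \frac{1}{r}\int_{x_0-r}^{x_0}\bigl(b^k(x_0,t)-b^k(x,t)\bigr)\,dx + \frac{1}{r}\,\norm{b^k(t)-b(t)}_{L^1(\R)}.
\]
For $x<x_0$ the one-sided Lipschitz bound gives the \emph{directional} estimate $b^k(x_0,t)-b^k(x,t)\leq L(t)(x_0-x)$, which holds regardless of how large the monotone drop of $g^k$ is; hence the first term is at most $\tfrac{r}{2}L(t)$. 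Optimizing $r$ yields
\[
d_R(b^k;b)(t)\;\lesssim\;\sqrt{L(t)\,\norm{b^k(t)-b(t)}_{L^1(\R)}},
\]
and since $\sqrt{L}\in L^1(0,T)$ by the hypothesis $L\in L^{1/2}$, the $L^1(0,T)$ norm of the right-hand side vanishes as $k\to\infty$.

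Your fallback via Helly-type monotone convergence is not a proof as written: pointwise a.e.\ convergence $b^k(x,t)\to b(x,t)$ does not control the $\esssup_x$ appearing in $d_R$, and you have not addressed the required uniformity in $x$ near jump points of $b(\cdot,t)$, where the convergence of the monotone parts is genuinely non-uniform.
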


\begin{proof}
    Start by fixing $k \in \N$, $R > 0$, $\delta > 0$ and $t \in (0, T)$. Choose a $\delta$-optimal point~$x_0 \in \R$ for the distance $d_R(b^k, b)(t)$, i.e.
    \begin{equation*}
        d_R(b^k; b)(t) < \dist \bigl(b^k(x_0, t), K_R[b](x_0, t)\bigr) + \delta.
    \end{equation*}
    For any $0 < r \leq R$, we have
    \begin{equation} \label{eq:min_identity}
    \begin{aligned}
        \dist \bigl(b^k(x_0, t), K_R[b](x_0, t)\bigr) & = \min_{y \in K_R[b](x_0, t)} |b^k(x_0, t) - y| \\
        & \leq \frac{1}{r} \int_{x_0 - r}^{x_0} |b^k(x_0, t) - b(x, t)|\, dx.
    \end{aligned}
    \end{equation}
    Assume without loss of generality that $b^k(x_0, t) > b(x, t)$ for a.e.~$x \in B_R(x_0)$ (since if $b^k(x_0, t) - b(x, t)$ were to change sign, then $b^k(x_0, t) \in K_R[b](x_0, t)$ and $d_R(b^k;b)(t)$ would be zero). Then the right-hand side of~\eqref{eq:min_identity} can be estimated by splitting the integral and using the one-sided Lipschitz condition:
    \begin{equation*}
    \begin{aligned}
        & \frac{1}{r} \int_{x_0 - r}^{x_0} |b^k(x_0, t) - b(x, t)|\, dx \\
        & = \frac{1}{r} \int_{x_0 - r}^{x_0} b^k(x_0, t) - b^k(x, t)\, dx + \frac{1}{r} \int_{x_0 - r}^{x_0} b^k(x, t) - b(x, t)\, dx \\
        & \leq \frac{|b^k(\cdot, t)|_{\lip^+}}{r} \int_{x_0 - r}^{x_0} (x_0 - x)\, dx + \frac{1}{r} \int_{x_0 - r}^{x_0} |b^k(x, t) - b(x, t)|\, dx \\
        & \leq \frac{r}{2} L(t) + \frac{1}{r} \norm{b^k(t) - b(t)}_{L^1(\R)}.
    \end{aligned}
    \end{equation*}
    Minimizing the right-hand side with $r \coloneqq \sqrt{2 \norm{b^k(t) - b(t)}_{L^1(\R)} / L(t)}$ yields
    \begin{equation*}
        d_R(b^k; b)(t) \leq \sqrt{2L(t) \norm{b^k(t) - b(t)}_{L^1(\R)}} + \delta.
    \end{equation*}
    In view of~\eqref{eq:l1_lip_limit}, by taking $k$ large enough we indeed have $r \leq R$, so the estimate is valid. Since $\delta > 0$ was arbitrary, integrating over $(0, T)$ and taking the limit $k \to \infty$ proves~\eqref{eq:dist_limit}.
\end{proof}

Using the above lemma, we are able to prove the following key proposition.

\begin{proposition} \label{prop:flow_convergence_pointwise}
    For any initial condition $x \in \R$, let $X^\eps(x)$ and $X(x)$ be the unique strong solution of~\eqref{eq:stochastic_flow_2} and the unique Filippov solution of~\eqref{eq:cl_flow_2}, respectively, with $X_0^\eps = X_0 = x$. Then $X^\eps(x)$ converges to $X(x)$ in $L^p(\Omega)$ in the uniform topology: for any $p>1$,
    \begin{equation} \label{eq:path_convergence}
        \lim_{\eps \to 0} \E\biggl[ \sup_{t \in [0, T]} |X_t^\eps(x) - X_t(x)|^p \biggr] = 0.
    \end{equation}
\end{proposition}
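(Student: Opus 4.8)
The plan is to combine the zero-noise convergence theorem for SDEs (Theorem~\ref{thm:filippov_convergence}), the stability of the drifts $b^\eps := a(m^\eps)$ provided by Lemma~\ref{lemma:drift_convergence}, and the uniqueness of the limiting Filippov flow $X$ of~\eqref{eq:cl_flow_2}, upgrading weak convergence to $L^p(\Omega)$ convergence in the supremum norm by means of a uniform integrability estimate. First I would verify the hypotheses of Lemma~\ref{lemma:drift_convergence} with $b^\eps = a(m^\eps)$ and $b = a(u)$, where $m^\eps$ is the viscous solution of~\eqref{eq:mean_field} and $u$ is the entropy solution. The maximum principle (Theorem~\ref{thm:mean_field}(i)) bounds $m^\eps$ uniformly in $L^\infty$, and since $a \in C^{0,\alpha}$ is locally Lipschitz away from where it matters (in fact for strictly convex $f \in C^2$, $a$ is $C^1$), the $L^\infty \cap L^1$ assumption on $b^\eps, b$ follows from the corresponding bounds on $m^\eps, u$; here one uses $u_\initial \in L^1 \cap L^\infty$ and that the viscous and entropy solutions inherit $L^1$ bounds. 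The one-sided Lipschitz bound (i) is the Oleinik-type estimate: for strictly convex flux, the entropy solution and its viscous approximations satisfy $|a(m^\eps(\cdot,t))|_{\lip^+} \leq C/t$, which lies in $L^{1/2}([0,T])$ as required. Finally, hypothesis (ii), namely $m^\eps \to u$ in $L^\infty((0,T);L^1(\R))$, is a classical result for the vanishing viscosity limit of strictly convex scalar conservation laws (Kruzkov, Holden--Risebro~\cite[Appendix~B]{holden_risebro_2015}); composing with the (uniformly) Lipschitz function $a$ on the relevant bounded range gives $\|a(m^\eps) - a(u)\|_{L^\infty((0,T);L^1(\R))} \to 0$.

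With the hypotheses of Lemma~\ref{lemma:drift_convergence} verified, that lemma gives $\|d_R(a(m^\eps); a(u))\|_{L^1(0,T)} \to 0$ for every $R > 0$, which is exactly the convergence criterion~\eqref{eq:convergence-in-local-convex-hull} needed to apply Theorem~\ref{thm:filippov_convergence}. That theorem yields tightness of $(X^\eps(x))_{\eps>0}$ in $\C = C([0,T];\R)$ and identifies every limit point as a Filippov solution of~\eqref{eq:cl_flow_2} with $X_0 = x$, almost surely. But by Theorem~\ref{thm:particle_paths}, since $u$ is the entropy solution, the Filippov flow of~\eqref{eq:cl_flow_2} is unique; hence every subsequential limit in law coincides with the (deterministic!) path $X(x)$. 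A standard subsequence argument then promotes this to $X^\eps(x) \to X(x)$ in law in $\C$. Since the limit is a constant (deterministic) path, convergence in law is equivalent to convergence in probability, so $\sup_{t\in[0,T]}|X^\eps_t(x) - X_t(x)| \to 0$ in probability.

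To obtain the $L^p$ statement~\eqref{eq:path_convergence} it remains to show that the family $\bigl(\sup_{t}|X^\eps_t(x) - X_t(x)|^p\bigr)_{\eps>0}$ is uniformly integrable. This follows from a uniform moment bound: from the integral equation $X^\eps_t(x) = x + \int_0^t a(m^\eps(X^\eps_r,r))\,dr + \eps W_t$ and the uniform $L^\infty$ bound $\|a(m^\eps)\|_\infty \leq \|a\|_{L^\infty([-M,M])} =: A$ (with $M = \|u_\initial\|_\infty$), one gets $\sup_{t}|X^\eps_t(x)| \leq |x| + AT + \eps\sup_t|W_t|$, and similarly $\sup_t|X_t(x)| \leq |x| + AT$; hence by the Burkholder--Davis--Gundy inequality (or just Doob),
\begin{equation*}
    \sup_{\eps \in (0,1)}\E\Bigl[\sup_{t\in[0,T]}|X^\eps_t(x) - X_t(x)|^{2p}\Bigr] \leq C_p\bigl(|x| + AT\bigr)^{2p} + C_p\,\E\bigl[\sup_{t}|W_t|^{2p}\bigr] < \infty,
\end{equation*}
so the $p$-th powers are bounded in $L^2(\Omega)$, giving uniform integrability. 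Combining with convergence in probability yields~\eqref{eq:path_convergence}. The main obstacle I anticipate is the measure-theoretic bookkeeping in verifying hypothesis (i) of Lemma~\ref{lemma:drift_convergence} uniformly in $\eps$ — i.e.\ justifying the time-decay $t^{-1}$ one-sided Lipschitz bound for $a(m^\eps(\cdot,t))$ with a constant independent of $\eps$, and checking it is in $L^{1/2}$; this requires the Oleinik estimate to hold uniformly along the viscous approximation, which is standard for strictly convex fluxes but must be cited or derived carefully. The rest is a routine assembly of the quoted results.
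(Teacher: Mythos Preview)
Your proposal is correct and follows essentially the same route as the paper: verify the hypotheses of Lemma~\ref{lemma:drift_convergence} for $a(m^\eps)\to a(u)$ via the Oleinik one-sided Lipschitz estimate and the $C([0,T];L^1)$ vanishing-viscosity limit, invoke Theorem~\ref{thm:filippov_convergence} to get tightness with Filippov limit points, use uniqueness of the Filippov flow (Theorem~\ref{thm:particle_paths}) to deduce convergence in law hence in probability to the deterministic $X(x)$, and then upgrade to $L^p$ via uniform moment bounds and Vitali. Your treatment of the uniform integrability step is in fact slightly more explicit than the paper's, which simply cites ``standard SDE estimates using the Burkholder--Davis--Gundy inequality''.
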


\begin{proof}
    We first show that $X^\eps$ converges to $X$ in probability. It is a standard result that when $u_\initial \in \BV_\loc \cap L^\infty \cap L^1(\R)$, then the viscous solution $m^\eps$ of~\eqref{eq:mean_field} and the entropy solution $u$ of~\eqref{eq:cl} satisfies
    \begin{equation*}
        m^\eps, u \in L^\infty(\R \times (0, T)) \cap C([0, T]; L^1(\R)), \qquad m^\eps \to u\ \text{in}\ C([0, T]; L^1(\R)),
    \end{equation*}
    and moreover that for all $t > 0$, the Oleinik estimates
    \begin{equation*}
        |m^\eps(\cdot, t)|_{\lip^+} \leq \frac{1}{t \min (f'')}, \qquad |u(\cdot, t)|_{\lip^+} \leq \frac{1}{t\min(f'')}
    \end{equation*}
    hold, where the minimum of $f''$ is taken over the range of the solution (see~\cite{hoff_1983, oleinik_1963}). Since we have assumed that $f \in C^2$, then $a \in C^1$ (recall that $a(u) = (f(u)-f(0))/u$), and consequently the assumptions of Lemma~\ref{lemma:drift_convergence} are fulfilled for the sequence of drifts $a(m^\eps)$ converging to $a(u)$. By Lemma~\ref{lemma:drift_convergence} and Theorem~\ref{thm:filippov_convergence}, this implies that the sequence of solutions $(X^\eps)$ is tight in $\C$ (here, $\C \coloneqq C([0, T])$). Since any convergent subsequence has the same limit point---the unique Filippov solution $X$ of~\eqref{eq:cl_flow_2}---the entire sequence converges in distribution to $X$. Moreover, because the limit $X$ is deterministic, this is equivalent to convergence in probability:
    \begin{equation*}
        \lim_{\eps \to 0} \P\left( \norm{X^\eps - X}_{\C} > \delta \right) = 0 \quad \text{for any } \delta > 0.
    \end{equation*}
    Next, we upgrade this result to convergence in expectation. The coefficients of the SDEs are uniformly bounded by assumption (the diffusion is even constant). Standard SDE estimates using the Burkholder-Davies-Gundy inequality therefore guarantee uniform moment bounds, i.e.,
    \begin{equation} \label{eq:uniform_integrablity}
        \sup_{\eps > 0} E\bigl[\norm{X^\eps}_{\C}^p \bigr] < \infty
    \end{equation}
    for $p> 1$. This ensures that the sequence of random variables ${Y^\eps := \norm{X^\eps - X}^p_{\C}}$ is uniformly integrable. By the Vitali Convergence Theorem, convergence in probability combined with \eqref{eq:uniform_integrablity} implies \eqref{eq:path_convergence}.
\end{proof}

We are now in the position to prove the main theorem about the zero-noise limit of the SPDE~\eqref{eq:mean_field_cl}.

\begin{proof}[Proof of Theorem~\ref{thm:main_zero_noise}]
    We aim to prove the convergence:
        \begin{equation*}
        \lim_{\eps \to 0} \E\biggl[\sup_{t \in [0, T]} \biggl|\int_{\R} \vartheta(x) \bigl(u_t^\eps(x) - u(x, t)\bigr)\, dx\biggr|^p \biggr] = 0
    \end{equation*}
    for any $\vartheta \in C_b(\R)$, where $u$ is the unique entropy solution of \eqref{eq:cl}. We only show the case $p=1$, since the general case $p > 1$ then follows in view of uniform boundedness of the weak solutions~$u^\eps$ and the Vitali Convergence Theorem. Recall that for each $\eps > 0$, the weak solution $u^\eps$ of~\eqref{eq:mean_field_cl} is given by the pushforward formula $u^\eps = X^\eps_{\#} u_\initial$, where $X^\eps$ is the stochastic flow of \diffeomorphisms generated by~\eqref{eq:stochastic_flow_2}. Moreover, by Theorem~\ref{thm:particle_paths}, the entropy solution $u$ of \eqref{eq:cl} is given by $u = X_\# u_\initial$, where $X$ is the unique Filippov flow generated by \eqref{eq:cl_flow}. Changing variables according to the pushforward formula and applying the triangle inequality and Fubini's theorem, we obtain the estimate
    \begin{equation} \label{eq:weak_convergence_estimate}
        \begin{aligned}
            & \E\biggl[\sup_{t \in [0, T]} \biggl|\int_{\R} \vartheta(x) \bigl(u_t^\eps(x) - u(x, t)\bigr)\, dx\biggr|\biggr] \\
            & \qquad\leq \int_\R \E\biggl[\sup_{t \in [0, T]} \bigl|\vartheta(X_t^\eps(x)) - \vartheta(X_t(x))\bigr|\biggr] |u_\initial(x)|\, dx.
        \end{aligned}
    \end{equation}
    We will show that the integral over $\R$ on the right-hand side converges to zero by dominated convergence. To this end, Proposition~\ref{prop:flow_convergence_pointwise} implies that, for any $x \in \R$, $X^\eps(x)$ converges to $X(x)$ in probability, and thus, by the Continuous Mapping Theorem (see e.g.~\cite[Lemma~5.3]{kallenberg_2021}),
    \begin{equation*}
        \vartheta(X^\eps(x)) \xrightarrow{\ p\ } \vartheta(X(x)) \qquad \text{in } \C
    \end{equation*}
    for any $\vartheta \in C_b(\R)$. Since $\vartheta$ is bounded, the family $\norm{\vartheta(X^\eps(x)) - \vartheta(X(x))}_\C$ is uniformly integrable, and hence the Vitali Convergence Theorem implies that, for every $x \in \R$,
    \begin{equation*}
        \lim_{\eps \to 0} \E\biggl[\sup_{t \in [0, T]}\bigl|\vartheta(X_t^\eps(x)) - \vartheta(X_t(x))\bigr|\biggr] = 0.
    \end{equation*}
    Finally, since $|\vartheta(X_t^\eps(x)) - \vartheta(X_t(x))| \leq 2 \norm{\vartheta}_{L^\infty}$, the integrand in \eqref{eq:weak_convergence_estimate} is dominated by the integrable function $2 \norm{\vartheta}_{L^\infty} u_\initial$. This yields the desired convergence.
\end{proof}

\section{Conclusions and future work}

This paper establishes the well-posedness of a novel mean-field stochastic perturbation for scalar conservation laws and proves its convergence to the unique entropy solution in the zero-noise limit. The analysis of the zero-noise limit is restricted to one spatial dimension, and relies on specific structural assumptions. A primary direction for future research is the extension to multiple dimensions. This represents a significant challenge, as the entropy characterization via particle paths \eqref{eq:cl_flow}, which we heavily rely on here, does not readily generalize. Further research should also aim to relax the technical assumptions on the initial data, particularly the $L^1(\R)$ condition, and investigate the behavior of the zero-noise limit for non-convex flux functions.

\appendix

\section{Heat kernel estimates}\label{app:heat-kernel-estimates}

The following elementary lemma is used to quantify the smoothing property of the heat kernel. Here, $\Delta_h$ denotes the difference operator $\Delta_h[u](x) = u(x+h) - u(x)$ for a vector $h \in \R^d$.

\begin{lemma} \label{lemma:heat_estimates}
    Let $\beta \in (0, 1]$. Then there exists a constant $C = C(\beta,d)$ such that
    \begin{equation} \label{eq:heat_kernel_differences}
        \bigl\|\Delta_{h}[K^\eps_t]\bigr\|_{L^1(\R^d)} \leq C\frac{|h|^\beta}{(\eps^2 t)^{\frac{\beta}{2}}} \quad \text{and}\quad \bigl\|\Delta_{h}[\partial_i K^\eps_t]\bigr\|_{L^1(\R^d)} \leq C\frac{|h|^\beta}{(\eps^2 t)^\frac{1+\beta}{2}}
    \end{equation}
    for all $t \in (0, T)$, $h \in \R^d$ and $i = 1, \dots, d$.
\end{lemma}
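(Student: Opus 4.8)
The plan is to reduce everything to two elementary bounds on $\norm{\Delta_h[\cdot]}_{L^1}$ applied to a fixed Gaussian and then interpolate between them. Write $\sigma^2 \coloneqq \eps^2 t$ and let $G(y) \coloneqq (2\pi)^{-d/2} e^{-|y|^2/2}$ be the standard Gaussian, so that $K_t^\eps(x) = \sigma^{-d} G(x/\sigma)$ and $\partial_i K_t^\eps(x) = \sigma^{-d-1}(\partial_i G)(x/\sigma)$. The change of variables $x = \sigma y$ gives, for any $g \in L^1(\R^d)$ and $h \in \R^d$,
\begin{equation*}
    \norm{\Delta_h\bigl[\sigma^{-d} g(\cdot/\sigma)\bigr]}_{L^1(\R^d)} = \norm{\Delta_{h/\sigma}[g]}_{L^1(\R^d)},
\end{equation*}
and hence $\norm{\Delta_h[K_t^\eps]}_{L^1} = \norm{\Delta_{h/\sigma}[G]}_{L^1}$ and $\norm{\Delta_h[\partial_i K_t^\eps]}_{L^1} = \sigma^{-1}\norm{\Delta_{h/\sigma}[\partial_i G]}_{L^1}$. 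So it suffices to prove a single scale-free estimate of the form $\norm{\Delta_k[\phi]}_{L^1(\R^d)} \leq C(\beta,d)\,|k|^\beta$ for $\phi \in \{G\} \cup \{\partial_i G\}$ and all $k \in \R^d$; substituting $k = h/\sigma$ then yields the factor $|h|^\beta \sigma^{-\beta} = |h|^\beta(\eps^2 t)^{-\beta/2}$ in the first inequality and $\sigma^{-1}|h|^\beta\sigma^{-\beta} = |h|^\beta(\eps^2 t)^{-(1+\beta)/2}$ in the second, exactly as claimed in~\eqref{eq:heat_kernel_differences}.

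For the scale-free estimate, fix $\phi$ as above (a Schwartz function, since every derivative of a Gaussian is a polynomial times the Gaussian) and bound $\norm{\Delta_k[\phi]}_{L^1}$ in two ways. First, trivially, $\norm{\Delta_k[\phi]}_{L^1} \leq 2\norm{\phi}_{L^1}$. Second, by the fundamental theorem of calculus, $\Delta_k[\phi](x) = \int_0^1 \nabla\phi(x + sk)\cdot k\, ds$, so Fubini and translation invariance of Lebesgue measure give $\norm{\Delta_k[\phi]}_{L^1} \leq |k| \int_{\R^d}|\nabla\phi(y)|\,dy$. Both $\norm{\phi}_{L^1}$ and $\int_{\R^d}|\nabla\phi|\,dy$ are finite constants depending only on $d$. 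Now invoke the elementary fact that $0 \le a \le A$ and $0 \le a \le B$ imply $a \le A^{1-\beta}B^\beta$ for every $\beta \in [0,1]$ (write $a = a^{1-\beta}a^\beta$), applied with $A = 2\norm{\phi}_{L^1}$ and $B = |k|\int_{\R^d}|\nabla\phi|$, to obtain
\begin{equation*}
    \norm{\Delta_k[\phi]}_{L^1(\R^d)} \leq \bigl(2\norm{\phi}_{L^1}\bigr)^{1-\beta}\Bigl(\int_{\R^d}|\nabla\phi|\,dy\Bigr)^{\beta} |k|^\beta = C(\beta,d)\,|k|^\beta.
\end{equation*}
Taking $\phi = G$ and $\phi = \partial_i G$ in turn and undoing the scaling as above proves both inequalities in~\eqref{eq:heat_kernel_differences}.

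There is no real obstacle here; the only points needing care are getting the powers of $\sigma = \eps\sqrt{t}$ right in the scaling step and observing that $\nabla G$ and $\nabla\partial_i G$ are genuinely integrable, which is immediate from their (polynomial)$\times$(Gaussian) form. One could alternatively compute the one-dimensional Gaussian integrals $\norm{\partial_i K_t^\eps}_{L^1}$ and $\norm{\partial_i\partial_j K_t^\eps}_{L^1}$ explicitly and read off the $\eps$- and $t$-dependence directly, but the scaling argument is cleaner and makes the dependence of the constant on $\beta$ transparent.
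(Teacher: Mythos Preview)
Your proof is correct and follows essentially the same strategy as the paper's: both combine the trivial bound $\norm{\Delta_h[\phi]}_{L^1}\leq 2\norm{\phi}_{L^1}$ with the Lipschitz bound $\norm{\Delta_h[\phi]}_{L^1}\leq |h|\,\norm{\nabla\phi}_{L^1}$ obtained from the fundamental theorem of calculus, and then interpolate to get the $|h|^\beta$ estimate. Your version is slightly slicker in that you scale out $\sigma=\eps\sqrt{t}$ at the outset and replace the paper's case split $|h|/\sigma\lessgtr 1$ by the one-line inequality $a\leq A^{1-\beta}B^\beta$, but these are cosmetic differences rather than a different argument.
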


\begin{proof}
    Let $h \in \R^d$. Then
    \begin{equation*}
        \begin{aligned}
            \bignorm{\Delta_h[K_t^\eps]}_{L^1} & = \int_{\R^d} |K_t^\eps(x+h) - K_t^\eps(x)|\, dx = \int_{\R^d} \biggl| \int_0^1 \nabla K_t^\eps(x+rh)\cdot h\, dr \biggr|\, dx \\
            & \leq |h| \int_0^1 \int_{\R^d} \bigl|\nabla K_t^\eps(x + rh)\bigr|\, dx\,dr \lesssim_d \frac{|h|}{\eps \sqrt{t}}.
        \end{aligned}
    \end{equation*}
    If $|h|/(\eps \sqrt{t}) < 1$, then the desired inequality holds upon raising to a power $\beta$. If on the other hand ${|h|/(\eps \sqrt{t}) \geq 1}$, then
    \begin{equation*}
        \bignorm{\Delta_h[K_t^\eps]}_{L^1} \leq 2 \norm{K_t^\eps}_{L^1} =2 \lesssim \frac{|h|^\beta}{(\eps^2 t)^{\frac{\beta}{2}}},
    \end{equation*}
    and the inequality is still valid. Similarly, the difference operator applied to the partial derivative can be bounded by
    \begin{equation*}
        \bignorm{\Delta_h[\partial_i K_t^\eps]}_{L^1} \leq |h| \int_0^1 \int_{\R^d} \bigl|\partial_i \nabla K_t^\eps(x + rh)\bigr|\, dx\,dr \lesssim_{d} \frac{|h|}{\eps^2 t}.
    \end{equation*}
    Assuming that $|h|/(\eps\sqrt{t}) < 1$, we find
    \begin{equation*}
        \bignorm{\Delta_{h}[\partial_i K_t^\eps]}_{L^1} \lesssim \frac{1}{\eps \sqrt{t}} \biggl(\frac{|h|}{\eps \sqrt{t}}\biggr)^\beta = \frac{|h|^\beta}{(\eps^2 t)^{\frac{1+\beta}{2}}}.
    \end{equation*}
    On the other hand, if $\frac{|h|}{\eps\sqrt{t}} \geq 1$ we have
    \begin{equation*}
        \norm{\Delta_{h}[\partial_i K_t^\eps]}_{L^1} \leq 2 \norm{\partial_i K_t^\eps}_{L^1} \lesssim_{d} \frac{1}{\eps \sqrt{t}} \leq \frac{|h|^\beta}{(\eps^2 t)^{\frac{1+\beta}{2}}},
    \end{equation*}
    so the last part of~\eqref{eq:heat_kernel_differences} holds in either case.
\end{proof}

\section{Numerical methods} \label{app:numerical_methods}

The figures in this paper were generated by numerically solving the viscous Burgers equation
\begin{equation*}
    \partial_t m^\eps + \partial_x\biggl(\frac{1}{2}(m^\eps)^2\biggr) = \frac{\eps^2}{2} \partial_{xx} m^\eps
\end{equation*}
and the associated SDEs
\begin{equation*}
    dX_t^\eps = \frac{1}{2} m^\eps(X_t^\eps, t)\, dt + \eps\, dW_t, \qquad dY_t^\eps = m^\eps(Y_t^\eps, t)\, dt + \eps\, dW_t.
\end{equation*}

The viscous Burgers equation was solved on a finite computational domain $[-L, L]$ with Dirichlet boundary conditions using a standard finite difference method. The spatial derivatives were discretized using a first-order upwind scheme for the advection term and a second-order central difference for the diffusion term. The forward Euler scheme was used for the temporal derivative, where the time step was chosen dynamically at the beginning of the simulation to satisfy the Courant-Friedrichs-Lewy (CFL) stability condition for both the advection and diffusion terms. The computational domain was chosen to be sufficiently large such that boundary effects did not influence the evolution of the shock for the duration of the simulation.

The SDEs were solved for an ensemble of paths using the Euler--Maruyama method. The drift terms were evaluated at off-grid points using bilinear interpolation from the discrete solution of the PDE. A common Brownian motion was used for both SDEs and all paths

The pushforward density $u^\eps = X^\eps_{\#} u_\initial$ was computed using a discretization of the formula $u^\eps(X_t) = u_\initial(X_0) |\partial X_0 / \partial X_t^\eps|$, i.e.~multiplying the initial density by a compression factor approximated by comparing the initial spacing of the numerical particles to their final spacing.

\begin{table}[h]
    \centering
    \begin{tabular}{l l l}
        \multicolumn{3}{l}{\textbf{PDE solver parameters}} \\
        \hline
        $L$ & Domain half-length & $6.0$ \\
        $T$ & Final time & $1.0$ \\
        $N_x$ & Number of spatial grid points & $1201$ \\
        \hline
        \multicolumn{3}{l}{\textbf{SDE solver parameters}} \\
        \hline
        $N_p$ & Number of stochastic paths & $4001$ \\
        $N_t$ & Number of time steps & $4000$ \\
        \hline
        \multicolumn{3}{l}{\textbf{Monte Carlo simulation}} \\
        \hline
        $N_{MC}$ & Number of Monte Carlo runs & $5000$ \\
        \hline
    \end{tabular}
    \caption{Default numerical parameters used for the simulations.} \label{table:default_params}
\end{table}

\printbibliography
\end{document}